\newtheorem{defi}{Def}
\newtheorem{coro}{Corollary}
\newtheorem{theo}{Theorem}
\newtheorem{prop}{Proposition}
\newtheorem{la}{Lemma}
\begin{document}
	
	\title{Diameter of orientations of graphs with given order and number of blocks }	
	\author[1]{P. Dankelmann}
	\author[2]{M.J. Morgan}
	\author[1]{E.J. Rivett-Carnac}
	\affil[1]{\small Department of Pure and Applied Mathematics, University of Johannesburg, Johannesburg, South Africa}
	\affil[2]{School of Mathematics, Statistics and Computer Science, University of KwaZulu-Natal, Durban, South Africa}
	
	\renewcommand\Authands{ and }
	
	\maketitle
	
	\begin{abstract}
A strong orientation of a graph $G$ is an assignment of a direction to each edge such that $G$ 
is strongly connected. The oriented diameter of $G$ is the smallest diameter among all 
strong orientations of $G$. A block of $G$ is a maximal connected subgraph of $G$ that has no cut 
vertex. A block graph is a graph in which every block is a clique. We show that every bridgeless 
graph of order $n$ containing $p$ blocks has an oriented diameter of at most 
$n-\lfloor \frac{p}{2} \rfloor$. This bound is sharp for all $n$ and $p$ with $p \geq 2$. 
As a corollary, we obtain a sharp upper bound on the oriented diameter in terms of order 
and number of 
cut vertices.  We also show that the oriented diameter of a bridgeless block graph of order 
$n$ is bounded above by $\lfloor \frac{3n}{4} \rfloor$ if $n$ is even and 
$\lfloor \frac{3(n+1)}{4} \rfloor$ if $n$ is odd.
	\end{abstract}

	\section{Introduction and terminology}
Let $G$ be a connected graph or a strong digraph. If $a$ and $b$ are vertices, then 
the \textit{distance} $d_G(a,b)$ is the minimum length, i.e., number of edges, of an $(a,b)$-path in $G$. The largest 
of the distances between any two vertices of $G$ is the \textit{diameter} of $G$, denoted by 
${\rm diam}(G)$. An \textit{orientation} of a graph $G$ is a digraph obtained from $G$ 
by assigning a direction to each edge of $G$. This paper is concerned with the 
\textit{oriented diameter} of a graph $G$, defined as the smallest diameter among all 
strong orientations of $G$, if such an orientation exists. We denote the oriented diameter 
of $G$ by $\overrightarrow{\rm diam}(G)$.\\
An edge $e$ of $G$ is a \textit{bridge} if the removal of $e$ disconnects $G$. A connected 
graph with no bridges is called \textit{bridgeless}. A well-known result, due to 
Robbins \cite{robbins}, states that every bridgeless graph has a strongly connected orientation. The study of distances in orientations of graphs was initiated by Chv\'{a}tal and Thomassen \cite{chvatal}. They showed that the oriented diameter of a bridgeless graph is at most $2 \, {\rm diam}(G)^2 + 2 \, {\rm diam}(G)$. They also constructed bridgeless graphs in which every orientation has diameter at least $\frac{1}{2} \, {\rm diam}(G)^2+{\rm diam}(G)$. The upper bound has recently been improved upon for values of ${\rm diam}(G) \geq 8$, to $1.373 \, {\rm diam}(G)^2 + 6.971 \, {\rm diam}(G)-1$ by Babu, Benson, Rajendraprasad and Vaka \cite{babu}.\\
		Bau and Dankelmann \cite{bau} presented an upper bound on the oriented diameter of bridgeless graphs in terms of order (i.e., the number of vertices) $n$ and minimum degree (i.e., the smallest of all the vertex degrees) $\delta$. Their upper bound of $\frac{11}{\delta +1}n+9$ was reduced to $\frac{7}{\delta +1}n$ by Surmacs \cite{surmacs}. More recently, Cochran \cite{cochran} also took into account the girth (i.e., the length of a shortest cycle) $g$ and showed that for any $\epsilon >0$ there is a bound of the form $(2g+\epsilon)\frac{n}{h(\delta,g)}+O(1)$, where $h(\delta,g)$ is a polynomial in $\delta$ and $g$ of degree $\lfloor \frac{g-1}{2} \rfloor$. For $g=3$ and $\epsilon<1$, this further improves upon Surmac's upper bound.\\
		A set $S$ of vertices in a graph $G$ is called a \textit{dominating set} of $G$ if every vertex in $V(G)-S$ is adjacent to some vertex in $S$. The \textit{domination number} $\gamma(G)$ of a graph $G$ is the cardinality of the smallest dominating set. Fomin, Matamala, Prisner and Rapaport  \cite{fomin1} proved that the oriented diameter can be bounded from above by $9 \gamma(G)-5$, and gave the stronger bound of $5 \gamma(G) -1$ in \cite{fomin 3}. Kurz and L\"{a}tsch \cite{kurz} further improved this upper bound to $4 \gamma(G)$ and conjectured that the minimum oriented diameter of bridgeless graphs is at most $\lceil \frac{7 \gamma(G) +1}{2}\rceil$. If true, this bound is sharp. The \textit{connected domination number} $\gamma_c(G)$ is defined as the minimum cardinality of a dominating set that induces a connected subgraph. Dankelmann, Morgan and Rivett-Carnac \cite{dankelmann} proved a sharp upper bound on the oriented diameter in terms of the connected domination number. They showed that the oriented diameter of a bridgeless graph $G$ is at most $2 \gamma_c(G) +3$ if $\gamma_c(G)$ is even, and $2 \gamma_c(G) +2$ if $\gamma_c(G)$ is odd. \\
Bounds on the oriented diameter for various graph classes have been studied. These 
include interval graphs and $2$-connected proper interval graphs \cite{fomin1, huang}, 
complete multipartite graphs \cite{koh2} and maximal outerplanar graphs \cite{chen}. 
Other results on the oriented diameter can be found, for example, in \cite{gutin,koh,kwok}. \\
A \textit{cut vertex} of a graph $G$ is a vertex whose removal, together with any incident 
edges, disconnects the graph. A \textit{block} of $G$ is a maximal connected subgraph of $G$ 
that has no cut vertex.  Harary \cite{harary} defined the \textit{block graph} $B(G)$ of a graph $G$ to be the graph in which the vertices of 
$B(G)$ are the blocks of $G$. Vertices in $B(G)$ are adjacent if the corresponding blocks in 
$G$ share a cut vertex of $G$. In this paper, we obtain a sharp upper bound on the 
oriented diameter in terms of order $n$ and number of blocks $p$. 
As a corollary to this result, we then bound the oriented diameter in terms of order and 
the number of cut vertices in a graph. A graph $G$ is a \textit{block graph}, or a \textit{clique 
tree}, if every block of $G$ is a complete graph. We also give a sharp upper bound on 
the oriented diameter of a bridgeless block graph in terms of order.\\
Also, for other distance parameters, bounds on distance parameters in terms of order and 
number of blocks or number of cut vertices are known. 
The \textit{Wiener index} is defined as the sum of distances between all (unordered) pairs 
of vertices of $G$.  Bessy, Dross, Hri\v{n}\'{a}kov\'{a}, Knor and 
\v{S}krekovski \cite{bessy, bessy-2} determined the graphs that maximise the Wiener index 
among all graphs of given order and number of blocks. 
Pandey and Patra \cite{pandey-2} gave a sharp lower bound on the Wiener index in terms of 
order and number of cut vertices. 
The \textit{eccentricity} of a vertex $v$ in $G$ is the distance from $v$ to a 
vertex farthest from $v$. The \textit{total eccentricity index} of a connected graph is the sum 
of the eccentricities of all its vertices. Pandey and Patra \cite{pandey} gave a sharp lower 
bound on the total eccentricity index given the order and number of cut vertices. 
They also provided sharp 
upper bounds on the total eccentricity index with $s$ cut vertices for $s=0,1,n-3,n-2$, and 
proposed a conjecture for $2 \leq s \leq n-4$. 
		
We will use the following notation. $G=(V,E)$ denotes a graph with vertex set $V(G)$ and edge set $E(G)$. By the \textit{order} of $G$ we mean $\vert V(G) \vert$, and usually denote this by $n$. A path $P$ from a vertex $u$ to a vertex $v$ will be referred to as a $(u,v)$-path and the \textit{length} of $P$ is denoted by $\ell (P)$. The union $G_1 \cup G_2$ of graphs $G_1=(V_1, E_1)$ and $G_2=(V_2, E_2)$ is the graph with vertex set $V_1 \cup V_2$ and edge set $E_1 \cup E_2$. For a digraph $\overrightarrow{G}$, a vertex $u$ is an \textit{in-neighbour} of $v \in V(\overrightarrow{G})$ if $\overrightarrow{uv} \in E(\overrightarrow{G})$ and an \textit{out-neighbour} if $\overrightarrow{vu} \in E(\overrightarrow{G})$. The \textit{out-eccentricity} of $v$ of $\overrightarrow{G}$ is the greatest distance from $v$ to a vertex $u \in V(\overrightarrow{G})$, and the \textit{in-eccentricity} of $v$ of 
		$\overrightarrow{G}$ is the greatest distance from a vertex $u \in V(\overrightarrow{G})$ to $v$. The \textit{eccentricity} ${\rm ecc}_{\overrightarrow{G}}(v)$ of a vertex $v$ of $\overrightarrow{G}$ 
		is the maximum of its out-eccentricity and in-eccentricity.\\
		A connected graph $G$ is said to be \textit{$2$-connected} if for every vertex $x \in V(G)$, $G-x$ is connected. Let $v$ be a cut vertex of a connected graph $G$. A \textit{branch} of $G$ at $v$ is a subgraph induced by the vertices of a component of $G-\{v\}$ together with $v$. An \textit{internal vertex} of a block of a graph $G$ is vertex which is not a cut vertex of $G$. An \textit{end block} is a block of $G$ which contains exactly one cut vertex of $G$.

\section{Results}
	
\subsection{Preliminary Results}
	
In this section we prove results that will be needed in Sections \ref{sect: oriented diameter and number of blocks} and \ref{sec: oriented diameter of block graphs }. 

	\begin{la}
	\label{la: miss at least one vertex}
Let $G$ be a bridgeless graph of order $k$, and let $x,z$ be two vertices that are not in the same block of $G$. 	
Then there exists a strong orientation $\overrightarrow{G}$ of $G$ such that 
$d_{\overrightarrow{G}} (x,z) \leq k-2$ and $d_{\overrightarrow{G}}(z,x) \leq k-2$.
	\end{la}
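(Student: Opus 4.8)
The plan is to exploit the fact that, since $x$ and $z$ lie in different blocks, some cut vertex separates them, and then to orient the two sides of this cut vertex independently, choosing each orientation so that the ``short'' direction points the way it is needed.

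First I would locate a cut vertex $c$ with $x$ and $z$ in different components of $G-c$: taking the path in the block graph between a block containing $x$ and one containing $z$, any cut vertex on it does the job. If $c\in\{x,z\}$, say $c=x$, the result is immediate, since then every $(x,z)$- and $(z,x)$-walk can be kept inside the branch $H$ containing $z$, and as $H$ is bridgeless of order at most $k-1$ Robbins' theorem gives a strong orientation of it with both distances at most $|V(H)|-1\le k-2$ (orient the remaining branches strongly as well). So assume $c\notin\{x,z\}$. Let $H_1$ be the branch at $c$ containing $x$ and $H_2$ the union of the remaining branches, so $V(H_1)\cap V(H_2)=\{c\}$; writing $a=|V(H_1)|$, $b=|V(H_2)|$ gives $a+b=k+1$. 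A short argument shows each branch is again bridgeless, and since a bridgeless simple graph has minimum degree at least $2$ (a leaf edge would be a bridge) no branch can consist of $c$ and a single further vertex; hence $a,b\ge 3$.

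The key step is a one-sided claim: \emph{in a bridgeless graph $H$ of order $m\ge 3$ there is, for any two vertices $u,w$, a strong orientation with $d_{\overrightarrow H}(u,w)\le m-2$.} Granting this, I apply it to $H_1$ with $(x,c)$ and to $H_2$ with $(z,c)$, obtaining strong orientations with $d_{\overrightarrow{H_1}}(x,c)\le a-2$ and $d_{\overrightarrow{H_2}}(z,c)\le b-2$, while the reverse distances obey the trivial bounds $d_{\overrightarrow{H_1}}(c,x)\le a-1$ and $d_{\overrightarrow{H_2}}(c,z)\le b-1$. Gluing the two orientations at $c$ yields a strong orientation $\overrightarrow{G}$, and since $c$ separates $x$ from $z$ every directed $x$--$z$ (resp. $z$--$x$) walk passes through $c$, so the distances add:
\[
d_{\overrightarrow{G}}(x,z)=d_{\overrightarrow{H_1}}(x,c)+d_{\overrightarrow{H_2}}(c,z)\le (a-2)+(b-1)=k-2,
\]
\[
d_{\overrightarrow{G}}(z,x)=d_{\overrightarrow{H_2}}(z,c)+d_{\overrightarrow{H_1}}(c,x)\le (b-2)+(a-1)=k-2.
\]
Spending the saved vertex on the $x\to c$ trip in $H_1$ but on the $z\to c$ trip in $H_2$ is exactly the balancing that brings both directions down to $k-2$ rather than $k-1$.

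It remains to prove the one-sided claim, which I expect to be the main obstacle. The name of the lemma points to the mechanism: a shortest undirected $u$--$w$ path $P$ must miss a vertex. Indeed, if $d_H(u,w)=m-1$ then $P\colon u=v_0,\dots,v_{m-1}=w$ would be Hamiltonian; but $u$ would then have a neighbour $v_j$ with $j\ge 2$, giving $d_H(u,v_j)=1<j$ and contradicting minimality of $P$, unless $\deg(u)=1$, impossible in a bridgeless graph. Hence $\ell(P)\le m-2$. I would then build a strong orientation of $H$ containing $P$ as a directed $u\to w$ path: orient $P$ from $u$ to $w$, take an edge-disjoint $u$--$w$ path $Q$ (available since $H$ is $2$-edge-connected) oriented from $w$ to $u$, so that $P\cup Q$ is already strong, and extend to all of $H$ by a closed ear decomposition, orienting each successive ear as a directed path; this preserves strong connectivity and never lengthens the $u\to w$ path, so $d_{\overrightarrow H}(u,w)\le \ell(P)\le m-2$. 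The delicate point is precisely this extension---checking that $P\cup Q$ is bridgeless so that an ear decomposition based on it exists, and that orienting ears as dipaths keeps the digraph strong---so that is where I would concentrate the rigorous work.
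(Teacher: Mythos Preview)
Your overall architecture matches the paper's: pick a cut vertex $c$ separating $x$ from $z$, work on each side, and arrange that the ``short'' direction on each side saves one vertex so the two directed distances add to at most $k-2$. The paper does exactly this, taking in each branch two edge-disjoint paths and orienting the shorter one in the needed direction.

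The gap is in your proof of the one-sided claim. You take $P$ to be a shortest $u$--$w$ path and then write ``take an edge-disjoint $u$--$w$ path $Q$ (available since $H$ is $2$-edge-connected)''. Menger's theorem guarantees that \emph{some} pair of edge-disjoint $u$--$w$ paths exists, but it does \emph{not} say that the particular path $P$ you already chose has an edge-disjoint companion. In fact this can fail even when $P$ is a shortest path. For a concrete witness, take $V(H)=\{u,p,x_1,x_2,x_3,x_4,v,q,r,s\}$ with edge set
\[
\{ux_1,\,x_1x_2,\,x_2x_3,\,x_3x_4,\,x_4v,\;up,\,px_2,\;x_1q,\,qx_3,\;x_3r,\,rv,\;x_4s,\,sr\}.
\]
This graph is bridgeless, $P=u,x_1,x_2,x_3,x_4,v$ is a shortest $u$--$v$ path (of length $5$), yet $H-E(P)$ has $u$ in the component $\{u,p,x_2\}$ and $v$ in the other component, so no $u$--$v$ path edge-disjoint from $P$ exists. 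Thus the step you flagged as routine is actually the one that breaks; the ear-decomposition extension you were worried about is, by contrast, unproblematic. The fix is to do what the paper does: start from a Menger pair $(P_1,P_2)$ of edge-disjoint $u$--$w$ paths (one may take them internally vertex-disjoint within each block), so that the shorter path necessarily misses an internal vertex of the longer and hence has length at most $|V(H_i)|-2$, and orient $P_1$ one way and $P_2$ the other. Your nice observation that a shortest path in a bridgeless graph cannot be Hamiltonian is correct, but it does not by itself let you build the required strong orientation around that specific path.
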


	\begin{proof}
Since $x$ and $z$ do not belong to the same block of $G$, there exists a cut vertex $y$ of $G$
so that $x$ and $z$ do not belong to the same component of $G-y$. Let $B_1$ and $B_2$
be the branches of $G$ at $x$ containing $x$ and $z$, respectively. 
Note that $k\geq |V(B_1)| + |V(B_2)| -1$. Since $B_1$ ($B_2$) is $2$-connected, there exist 
two edge-disjoint paths $P_1^1$ and $P_2^1$ ($P_1^2$ and $P_2^2$) from $x$ to $y$ ($y$ to $z$). 
We may assume that for $i \in \{1,2\}$ we have $\ell(P^i_1) \leq \ell(P^i_2)$. Since $P_1^i$ 
misses at least one vertex of $B_i$,  $\ell( P^i_1) \leq |V(B_i)| -2$ and 
$\ell( P^i_2) \leq |V(B_i)| -1$. \\
			We define a strong orientation $\overrightarrow{G}$  of $G$ as follows. We direct $P_1^1$ as $\overrightarrow{P_1^1}$ from $x$ to $y$ and $P_2^1$ as $\overrightarrow{P_2^1}$ from $y$ to $x$. Similarly, we direct $P_1^2$ as $\overrightarrow{P_1^2}$ from $z$ to $y$ and $P_2^2$ as $\overrightarrow{P_2^2}$ from $y$ to $z$. All remaining edges may be directed so as to extend the orientation to a strong orientation $\overrightarrow{G}$. Since $\overrightarrow{P_1^1}$, $\overrightarrow{P_2^1}$, $\overrightarrow{P_1^2}$ and $\overrightarrow{P_2^2}$ are edge-disjoint, there are no conflicts in orientation. \\
			Note that $\ell (P_1^1)+ \ell (P_2^2) \leq |V(B_1)| -2+|V(B_2)| -1 \leq k-2$. Hence, the path $\overrightarrow{P_1^1} \cup \overrightarrow{P_2^2}$ from $x$ to $z$ has length at most $k-2$, which shows that $d_{\overrightarrow{G}}(x,z) \leq k-2$. Similarly, the path $\overrightarrow{P_1^2} \cup \overrightarrow{P_2^1}$ shows that $d_{\overrightarrow{G}}(z,x) \leq k-2$. 
	\end{proof}

	\begin{la}
	\label{la: number of leaves}
		If $T$ is a tree with $p$ vertices that does not contain two adjacent vertices of degree $2$, then $T$ contains at least $\frac{p+5}{4}$ leaves. 
	\end{la}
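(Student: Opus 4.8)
The plan is to count the vertices of $T$ by degree and combine the handshake identity for trees with the structural hypothesis. Write $L$, $D$, and $B$ for the number of vertices of $T$ of degree $1$, of degree $2$, and of degree at least $3$, respectively, so that $L+D+B=p$ and, since $T$ has $p-1$ edges, $\sum_{v} \deg_T(v) = 2(p-1)$. After substituting $p=L+D+B$, the target inequality $L \geq \frac{p+5}{4}$ becomes $3L \geq D+B+5$, and I would reach it by proving two separate bounds and then eliminating $D$ and $B$ between them.

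First I would derive the classical leaf inequality $L \geq B+2$. This uses only the degree count: from $\sum_{v} \deg_T(v) \geq L + 2D + 3B$ together with $\sum_{v}\deg_T(v) = 2(p-1) = 2L+2D+2B-2$ one obtains $L-2 \geq B$. The hypothesis about degree-$2$ vertices is not needed at this stage.

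The second bound is where the hypothesis enters, and I expect this to be the main point to get right. I want to bound the number $D$ of degree-$2$ vertices from above. The condition that no two degree-$2$ vertices are adjacent says exactly that each of the $2D$ edges incident to a degree-$2$ vertex has its other endpoint among the non-degree-$2$ vertices (the leaves and the branch vertices), and that these $2D$ edges are distinct. Counting these $2D$ edge-endpoints against the total degree available at non-degree-$2$ vertices gives $2D \leq L + \sum_{\deg \geq 3}\deg_T(v)$. Substituting $\sum_{\deg\geq 3}\deg_T(v) = 2(p-1) - L - 2D$ from the handshake identity collapses this to $4D \leq 2p-2$, that is, $D \leq \frac{p-1}{2}$.

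Finally I would combine the two bounds. From $L \geq B+2$ and $B = p-L-D$ I get $2L+D \geq p+2$, and feeding in $D \leq \frac{p-1}{2}$ yields $4L \geq 2(p+2) - 2D \geq 2(p+2) - (p-1) = p+5$, which is the claim. I expect the extremal configuration (for instance the spider obtained by subdividing each edge of $K_{1,3}$ once, which has $p=7$ and exactly $3$ leaves) to make all three inequalities sharp simultaneously, confirming that no slack is wasted; the only degenerate point worth a remark is that the leaf inequality $L \geq B+2$ presumes $T$ has at least two vertices, which is immediate here.
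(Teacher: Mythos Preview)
Your proof is correct. In fact, once unpacked, your argument and the paper's rest on the same two inequalities, namely $L \geq B+2$ and $D \leq L+B-1$ (your $D \leq \frac{p-1}{2}$ is the same statement after substituting $L+B = p-D$), but the packaging differs. The paper introduces the auxiliary tree $T'$ obtained from $T$ by suppressing every degree-$2$ vertex: the first inequality is then the handshake bound $|V(T')| \leq 2p_1 - 2$ in a tree with no degree-$2$ vertices, while the second is the geometric observation that, since no two degree-$2$ vertices of $T$ are adjacent, $T$ is recovered from $T'$ by subdividing each edge at most once, giving $p \leq 2|V(T')|-1$. Your direct edge-endpoint count avoids the auxiliary object altogether and is a touch more elementary; the paper's route makes the role of the hypothesis visually clearer (it literally bounds one subdivision per edge). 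Either presentation yields $4L \geq p+5$ with the same extremal example.
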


	\begin{proof}
		Let $T$ be a tree with $p$ vertices that does not contain two adjacent vertices of degree $2$. Let $T'$ be the tree obtained from $T$ by suppressing all the vertices of degree $2$, i.e., by deleting every vertex of degree $2$ and joining its two neighbours by a new edge. Let $\vert V(T') \vert =p'$ and let $p_1$ be the number of leaves of $T'$. Clearly, $T$ also has $p_1$ leaves. Then every vertex of $T'$ that is not a leaf has degree at least $3$. That is, $p'-p_1$ vertices have degree at least $3$. Since the sum of all the vertex degrees in $T'$ is $2(p'-1)$ we get that $p_1+3(p'-p_1) \leq 2p'-2$ and thus
		\begin{equation}
		\label{eqn: order of T'}
				p' \leq 2p_1 -2.
		\end{equation}
	Since $T$ does not contain two adjacent vertices of degree $2$, we obtain $T$ from $T'$ by subdividing some edges of $T'$ once. Hence, $T$ has at most $p'+(p'-1)=2p'-1$ vertices. Since $T$ has in total $p$ vertices, and by (\ref{eqn: order of T'}), we have that
		\begin{equation*}
			\label{eqn: p<4p1-5}
				p \leq 2p'-1 \leq 2(2p_1-2)-1=4p_1-5.
		\end{equation*}
	Thus, 
	\[ p_1 \geq \frac{p+5}{4}. \qedhere \] 
	\end{proof}

\begin{la}
	\label{la: cut-block graph a tree}
	Let $G$ be a connected graph. If no cut vertex of $G$ belongs to more than two blocks, then its block graph $B(G)$ is a tree.
\end{la}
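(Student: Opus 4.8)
The plan is to relate Harary's block graph $B(G)$ to the standard block--cut-vertex tree $T$ of $G$, whose vertices are the blocks and the cut vertices of $G$, and in which a block $B$ is joined to a cut vertex $c$ precisely when $c \in V(B)$. It is classical that $T$ is a tree for every connected graph $G$, and I would quote this rather than reprove it. Since every edge of $T$ joins a block to a cut vertex, the neighbours of a cut vertex $c$ in $T$ are exactly the blocks containing $c$. The hypothesis that no cut vertex lies in more than two blocks therefore says precisely that every cut vertex has degree at most $2$ in $T$; and as a cut vertex always lies in at least two blocks, its degree in $T$ is exactly $2$.

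Next I would suppress all the cut vertices of $T$, that is, delete each (degree-$2$) cut vertex and join its two neighbouring block-vertices by a new edge. Since no two cut vertices are adjacent in $T$, these suppressions do not interfere and may be performed simultaneously. Suppressing degree-$2$ vertices of a tree always yields a tree, so the resulting graph $T'$ on the block-vertices is a tree.

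The crux is to verify that $T'$ is exactly $B(G)$. Two blocks $B_1$ and $B_2$ become adjacent under the suppression if and only if $T$ contained a path $B_1, c, B_2$ through some suppressed cut vertex $c$, i.e.\ if and only if some cut vertex $c$ lies in both $B_1$ and $B_2$; this is precisely Harary's adjacency rule defining $B(G)$. Here one invokes the elementary fact that two distinct blocks share at most one vertex and that such a shared vertex is necessarily a cut vertex, which guarantees that no multiple edges arise and that the correspondence is exact. Matching up these two a priori different descriptions of the ``block graph'' is the step I expect to require the most care. Once $T' = B(G)$ is established, the lemma follows at once.

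As a self-contained alternative avoiding the block--cut-vertex tree theorem, one could argue directly. Connectedness of $B(G)$ follows from that of $G$. For acyclicity, take a shortest cycle $B_1 B_2 \cdots B_m B_1$ in $B(G)$ and let $c_i$ denote the cut vertex shared by $B_i$ and $B_{i+1}$ (indices modulo $m$); the hypothesis forces the $c_i$ to be pairwise distinct. Joining $c_{i-1}$ to $c_i$ by a path inside each block $B_i$ then produces a cycle of $G$ containing edges from several distinct blocks, contradicting the fact that every cycle of a graph lies within a single block.
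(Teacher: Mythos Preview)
Your proposal is correct. Your primary argument---reducing to the classical block--cut-vertex tree $T$, observing that the hypothesis forces every cut-vertex node to have degree exactly $2$, and then suppressing these nodes to obtain $B(G)$---is a genuinely different route from the paper's. The paper argues exactly along the lines of your self-contained alternative: it takes a cycle $B_1,\ldots,B_t$ in $B(G)$, notes that the shared cut vertices $v_i$ are pairwise distinct (this is where the hypothesis enters), and concatenates paths inside the blocks to produce a cycle of $G$ meeting several blocks, contradicting the fact that any cycle lies in a single block. Your primary approach is cleaner and more conceptual, trading a short direct argument for a citation of the block--cut-vertex tree theorem; the paper's approach (your alternative) is fully self-contained, which suits an auxiliary lemma, but requires the small verifications you flag---distinctness of the $c_i$ and that the concatenation really is a cycle.
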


\begin{proof}
Clearly, $B(G)$ is connected. Suppose to the contrary that $B(G)$ is not a tree. 
Then $B(G)$ contains a cycle. Let blocks $B_1, B_2, \ldots B_t$ in $G$ form a cycle in $B(G)$. 
For $i=1,2,\ldots,t-1$ let $v_{i+1}$ be the cut vertex of $G$ common to $B_i$ and $B_{i+1}$, 
and let $v_1 \in V(G)$ be the cut vertex of $G$ common to $B_t$ and $B_1$. 
Since no cut vertex 
belongs to more than two blocks, the $v_i$ are distinct. There exists a $(v_1, v_t)$-path $P_1$ 
in $G$ through vertices $v_2, v_3, \ldots, v_{t-1}$ and a $(v_1, v_t)$-path $P_2$ in $G$ that 
passes through the vertices in $B_t$. Hence, $P_1$ and $P_2$ are internally disjoint and thus 
form a cycle in $G$. Clearly, the internal vertices of $P_1$ do not belong to $B_t$, while all 
vertices of $P_2$ belong to $B_t$. However, $P_1 \cup P_2$ forms a cycle in $G$, so all vertices 
in $P_1 \cup P_2$ are in  the same block. However, $v_1, v_2, \ldots , v_t$ clearly do not belong 
to the same block, a contradiction.
\end{proof}

\begin{prop}
	\label{prop: n 2p+1}
	Let $G$ be a bridgeless graph with $n$ vertices, $p$ blocks and $s$ cut vertices. Then every block of $G$ contains at least three vertices and
	\begin{equation}
	\label{eqn: n > 2p+1}
		n\geq2p+1,
	\end{equation}
	and
	\begin{equation}
	\label{eqn: s < p-1}
		n \geq 2s+3.
	\end{equation}
\end{prop}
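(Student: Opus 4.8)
The plan is to reduce all three assertions to a single counting identity relating the orders of the blocks to $n$ and $p$. First I would dispose of the claim that every block contains at least three vertices: recall that every block of a graph is either a single vertex, a single edge ($K_2$), or a $2$-connected subgraph. Since $G$ is connected and bridgeless, every edge lies on a cycle, so no block is an isolated vertex or a $K_2$; hence every block of $G$ is $2$-connected and therefore contains at least three vertices.

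Next I would set up the block--cut tree $\mathcal{T}$ of $G$, whose vertex set consists of the $p$ blocks together with the $s$ cut vertices, with a block $B$ joined to a cut vertex $v$ precisely when $v \in V(B)$. Since $G$ is connected, $\mathcal{T}$ is a tree on $p+s$ vertices and hence has exactly $p+s-1$ edges. For a vertex $v$ of $G$, write $b(v)$ for the number of blocks containing $v$, so that $b(v)=1$ for every non-cut vertex and $b(v)\geq 2$ for every cut vertex. Because the edges of $\mathcal{T}$ are exactly the incidences between cut vertices and blocks, we obtain $\sum_{v \text{ cut}} b(v) = p+s-1$.

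The key identity then follows by double counting vertex--block incidences:
\[
\sum_{B \text{ block}} |V(B)| \;=\; \sum_{v \in V(G)} b(v) \;=\; (n-s) + \sum_{v \text{ cut}} b(v) \;=\; (n-s)+(p+s-1) \;=\; n+p-1 .
\]
Since each of the $p$ blocks has at least three vertices, the left-hand side is at least $3p$, giving $n+p-1 \geq 3p$ and hence $n \geq 2p+1$, which is \eqref{eqn: n > 2p+1}. For \eqref{eqn: s < p-1} I would feed the bound $b(v)\geq 2$ into the incidence count: $2s \leq \sum_{v \text{ cut}} b(v) = p+s-1$, so $p \geq s+1$. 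Substituting into $n \geq 2p+1$ yields $n \geq 2(s+1)+1 = 2s+3$.

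The only delicate point is the bookkeeping of cut vertices, which are shared among several blocks and so must not be over-counted; the block--cut tree makes this transparent by recording each vertex--block incidence exactly once as a tree edge. As a consistency check I would verify the degenerate case $p=1$, where $s=0$, the graph $G$ is itself $2$-connected, and the identity reads $\sum_{B}|V(B)| = n$ while both bounds reduce to $n \geq 3$; this is already covered by the general argument since $\mathcal{T}$ is then a single vertex with no edges.
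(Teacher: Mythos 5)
Your proof is correct, but it takes a genuinely different route from the paper. The paper argues by induction on $p$: for $p\geq 2$ it locates an end block $B$, deletes its set $U$ of internal vertices (noting $|U|\geq 2$ because every block has at least three vertices), observes that $G-U$ has $p-1$ blocks, and applies the induction hypothesis; the bound $n\geq 2s+3$ is then deduced from $n \geq 2p+1$ together with the (merely asserted) fact that $s\leq p-1$. You instead prove everything directly from the block--cut tree via the double-counting identity $\sum_{B}|V(B)| = n+p-1$. This buys two things: the exact identity is stronger and more transparent than the inductive inequality, and your incidence count $2s\leq \sum_{v\ \mathrm{cut}} b(v)=p+s-1$ supplies an actual proof of $s\leq p-1$, which the paper leaves as an assertion. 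Both arguments rest on the same preliminary observation that bridgelessness forces every block to be $2$-connected and hence to have at least three vertices, and your justification of that point is fine. (Both your proof and the paper's implicitly assume $G$ is nontrivial, i.e., not a single vertex, but that degenerate case is outside the intended scope.)
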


\begin{proof}
	Clearly, a block with two vertices would contain an edge that is a bridge of $G$. Since $G$ is bridgeless, every block thus contains at least three vertices. We prove the inequality by induction on $p$. Let $G$ be a bridgeless graph of order $n$. The statement clearly holds for $p=1$. For $p \geq 2$, $G$ contains at least one cut vertex and thus, an end block, say $B$. Let $U$ be the set of internal vertices of $B$. Since every block of $G$ has at least three vertices, we have $\vert U \vert \geq 2$, and $G-U$ is a graph with $p-1$ blocks and $n-\vert U \vert$ vertices. Applying induction yields that $\vert V(G-U) \vert \geq 2(p-1)+1$, and so 
	\begin{equation*}
	\begin{split}
		n&=\vert V(G-U) \vert+\vert U \vert  \\
		&\geq 2(p-1)+1+2 \\
		&=2p+1.
	\end{split}
	\end{equation*}
Inequality (\ref{eqn: s < p-1}) follows from (\ref{eqn: n > 2p+1}) and that for $p$ blocks there are at most $p-1$ cut vertices, so $s \leq p-1$.
\end{proof}

	The remainder of results in this section are pertinent to our bound in section \ref{sec: oriented diameter of block graphs }. The following two definitions and lemma come from \cite{dankelmann}.

\begin{defi}
	Let $T$ be a not necessarily spanning subtree of a multigraph $G$. By a $T$-path we mean a path 
	in $G-E(T)$ whose ends are distinct vertices of $T$, but whose internal vertices are not. We 
	say that a $T$-path $P$ covers an edge $e$ of $T$ if $e$ is on the unique cycle contained 
	in $T\cup P$. By a $T$-cycle we mean a cycle in $G-E(T)$ that shares exactly one vertex with $T$. 
	If $P$ is a $T$-path or a $T$-cycle, then we refer to the vertices of $P$ that are not in $T$
	(that are in $T$) as the internal vertices (the ends) of $P$.  
\end{defi}	

\begin{defi}
	Let $T$ be a subtree of a multigraph $G$. Let ${\cal P}$ be a set of (not necessarily disjoint) 
	$T$-paths and $T$-cycles, and let $k \in \mathbb{N}$. We say that ${\cal P}$ is a $k$-extension 
	of $T$ in $G$ if no $T$-path or $T$-cycle in ${\cal P}$ has length greater than $k$, and 
	$T \cup \bigcup_{P \in {\cal P}} P$ is bridgeless.
\end{defi}

\begin{la}
	\label{la: tree lemma}
	Let $G$ be a multigraph and $T$ a (not necessarily spanning) subtree of $G$ of order $p$, where 
	$p\geq 1$. Let ${\cal P}$ be a $k$-extension of $T$, $k\geq 1$, and \\
	$H = T \cup \bigcup_{P \in {\cal P}} P$. Then there exists a strong orientation $D$ of 
	some submultigraph of $H$ containing $T$ such that
	\begin{enumerate}
		\renewcommand{\theenumi}{\roman{enumi}}
		\item 	for every two vertices $u,v \in V(T)$,
		\begin{equation*}
			\label{eqn: lemma bound 1}
			d_D(u,v) \leq \begin{cases}
				\frac{k+1}{2}p -1 & \textnormal{for $p$ even,} \\ 
				\frac{k+1}{2}(p-1) & \textnormal{for $p$ odd}, \\
			\end{cases}
		\end{equation*}
		\item for every two vertices $u,v \in V(D)$,
		\begin{equation*}
			\label{eqn: lemma bound 2}
			d_D(u,v) \leq \begin{cases}
				\frac{k+1}{2}p + k-2 & \textnormal{for $p$ even,} \\ 
				\frac{k+1}{2}p + \frac{k-3}{2} & \textnormal{for $p$ odd}. \\
			\end{cases}
		\end{equation*}
	\end{enumerate}
\end{la}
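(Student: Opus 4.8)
The plan is to build the orientation $D$ explicitly and then bound distances by analysing how one travels through it. Since $\mathcal P$ is a $k$-extension, $H=T\cup\bigcup_{P\in\mathcal P}P$ is bridgeless, so no edge of $T$ is a bridge of $H$; hence every edge of $T$ lies on a cycle of $H$ and is therefore covered by some $T$-path or $T$-cycle in $\mathcal P$. First I would discard redundant members, keeping a subfamily $\mathcal P'$ whose members still cover every edge of $T$ and whose union $H'=T\cup\bigcup_{P\in\mathcal P'}P$ has the structure of a ``tree of cycles'': each chosen $T$-path $P$, together with the tree path joining its ends, forms a cycle $C_P$ whose two arcs between those ends are the $T$-path (of length at most $k$) and the tree path; a $T$-cycle contributes a cycle of length at most $k$ through one vertex of $T$. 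Taking these cycles as the blocks of $H'$, the block--cut tree of $H'$ is a genuine tree, and $H'$ is the submultigraph of $H$ that I orient.

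The engine is an \emph{alternating} orientation. Because the block--cut tree is bipartite, I can $2$-colour the cycle-blocks so that two blocks sharing a cut vertex receive different colours; consequently, along the path in $H'$ joining any two vertices, the cycles traversed alternate in colour. I orient each cycle as a directed cycle, choosing the sense of rotation by its colour, so that when travelling from one vertex of $T$ to another the cheap tree-arc and the expensive $T$-path-arc (length $\le k$) are used alternately rather than piling up. In the model case where $T$ is a path $v_1\cdots v_p$ and each cycle covers a single edge, a direct computation gives, between the extreme vertices, a forward/return cost of at most $\lceil\frac{p-1}{2}\rceil k+\lfloor\frac{p-1}{2}\rfloor$, which is $\frac{k+1}{2}p-1$ for $p$ even and $\frac{k+1}{2}(p-1)$ for $p$ odd, i.e. exactly bound (i). To establish (i) for an arbitrary pair in $V(T)$ I would run an induction on $p$, peeling off one leaf of $T$ together with the pendant cycle covering its edge and orienting that cycle opposite to its neighbour; the parity of $p$ then governs the two cases, since attaching one cycle raises the controlling bound by $k$ when the new order is even and by $1$ when it is odd, for a total of $k+1$ over two steps, reproducing the $p\to p+2$ increment.

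For part (ii) I would reduce to part (i). Every $z\in V(D)\setminus V(T)$ is an internal vertex of exactly one cycle $C_P$, hence at distance at most $\ell(C_P)-1\le k-1$ both from and to the ends of $C_P$ lying in $T$. A shortest route between two arbitrary vertices of $D$ therefore consists of at most a $(k-1)$-detour out of a cycle, a trip between two vertices of $T$ governed by (i), and a final $(k-1)$-detour into a cycle; bounding these and checking both parities yields the extra additive term, giving $\frac{k+1}{2}p+k-2$ for $p$ even and $\frac{k+1}{2}p+\frac{k-3}{2}$ for $p$ odd. Strong connectivity of $D$ is then immediate: each block is a directed cycle, and blocks meeting at cut vertices along the connected block--cut tree assemble into a strongly connected whole containing $T$.

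The main obstacle is twofold. First, extracting the subfamily $\mathcal P'$ so that $H'$ is an honest tree of cycles: members of $\mathcal P$ may overlap, and one must argue that a covering subfamily with internally disjoint $T$-paths (or at least a clean block--cut structure) can be selected, and that the $2$-colouring then forces alternation along \emph{every} tree-of-cycles path simultaneously with a globally consistent sense of rotation, so no cycle is asked to rotate two ways. Second, and more delicate, is extracting the precise constants: the bound is tight, so the estimates cannot be wasteful, and the parity of $p$ interacts with the colour of the cycle peeled off last. I would therefore strengthen the inductive hypothesis to record, alongside the two distance bounds, the colour and orientation of the extremal cycle, so that attaching a new cycle of length at most $k+1$ with the opposite orientation increases the round-trip cost by exactly the amount dictated by the parity, reproducing the claimed expressions without slack.
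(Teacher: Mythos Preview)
The paper does not prove this lemma at all: it is quoted verbatim from the companion paper \cite{dankelmann} (``The following two definitions and lemma come from \cite{dankelmann}''), so there is no in-paper proof to compare against. Your high-level strategy---cover $T$ by short cycles and orient them with alternating rotation so that tree-arcs and $T$-path-arcs are used in turn---is the right intuition, and your computation for the path case reproduces the bounds exactly. But the proposal has two genuine gaps that you flag as ``obstacles'' without actually resolving.

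First, the extraction of a subfamily $\mathcal P'$ making $H'$ a tree of cycles need not be possible. Take $T$ the path $v_1v_2v_3v_4v_5$ and $\mathcal P=\{P_1,P_2\}$ with $P_1$ a $T$-path from $v_1$ to $v_4$ and $P_2$ a $T$-path from $v_2$ to $v_5$. Both are needed (dropping either leaves a bridge), yet the cycles $C_{P_1}$ and $C_{P_2}$ share the tree edges $v_2v_3,v_3v_4$; the resulting $H'$ is $2$-connected, a single block, and your block-by-block alternation scheme does not apply. The correct orientation here (direct $T$ forward and both $T$-paths backward) still meets the bound, but it is not produced by your framework; a proof must handle $T$-paths whose tree segments overlap, typically via an induction that peels off a leaf together with \emph{one} covering $T$-path and absorbs the possible overlap into the inductive hypothesis, rather than assuming a clean cactus structure up front.

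Second, even granting a tree-of-cycles structure, the sentence ``because the block--cut tree is bipartite, I can $2$-colour the cycle-blocks so that two blocks sharing a cut vertex receive different colours'' is wrong as stated: in the block--cut tree two blocks sharing a cut vertex are at distance~$2$ and hence receive the \emph{same} colour. What you actually need is that the \emph{block graph} $B(H')$ is a tree, which by the paper's Lemma~\ref{la: cut-block graph a tree} requires that no cut vertex of $H'$ lie in three or more blocks; when $T$ is a star and each edge is covered by its own $T$-path this fails, and no global $2$-colouring with your property exists. A workable fix is to $2$-colour the \emph{edges of $T$} by parity of depth from a root and orient each covering cycle according to the colour of the deepest tree edge it contains, but this is a different argument from the one you sketched and its interaction with the tight constants needs to be checked carefully.
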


Plesnik \cite{plesnik}, Boesch and Tindell \cite{boesch} and Maurer \cite{maurer} proved that a complete graph on $n$ vertices, with $n\geq 3$, has an orientation of diameter $2$, unless $n=4$, in which case it has an orientation of diameter $3$. The following straightforward proposition states that $K_4$ has an orientation of diameter $3$ such that a given vertex has eccentricity $2$. 

\begin{prop}
	\label{prop: orientation Kn}
	A complete graph $K_n$, $n\geq 3$, has an orientation of diameter $2$, unless $n=4$. In this case, for every given vertex $v \in K_4$, there exists an orientation $D$ such that ${\rm ecc}_D(v)=2$.
\end{prop}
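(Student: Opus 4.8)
The plan is to separate the two claims. The statement that $K_n$ has an orientation of diameter $2$ for all $n \geq 3$ with $n \neq 4$, and that the minimum diameter of an orientation of $K_4$ equals $3$, is exactly the classical theorem of Plesn\'ik, Boesch--Tindell and Maurer recalled immediately before the proposition, so I would invoke it directly rather than reprove it. Hence the only genuinely new content is the refinement in the case $n = 4$: that for any prescribed vertex $v$ of $K_4$ one can find an orientation $D$ with ${\rm ecc}_D(v) = 2$, even though the diameter of $D$ is forced to be $3$.

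I would establish this by exhibiting one explicit tournament and verifying it. Writing $V(K_4) = \{v, a, b, c\}$, orient the six edges as
\[
	v \to a, \quad a \to b, \quad a \to c, \quad b \to v, \quad c \to v, \quad b \to c .
\]
The idea behind the choice is to let $v$ dominate a single vertex $a$, which in turn dominates the other two, while both of those dominate $v$; this keeps every out-distance and in-distance of $v$ at most $2$ while permitting the overall diameter to be $3$.

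The verification splits into three short checks. First, $D$ is strongly connected, since $v \to a \to b \to c$ together with $b \to v$ and $c \to v$ already shows that every vertex can both reach and be reached from $v$. Second, one reads off the out-distances $d_D(v,a) = 1$ and $d_D(v,b) = d_D(v,c) = 2$ (reaching $b$ and $c$ through $a$), and the in-distances $d_D(b,v) = d_D(c,v) = 1$ and $d_D(a,v) = 2$ (via $a \to b \to v$); together these give ${\rm ecc}_D(v) = 2$. Third, $d_D(c,b) = 3$, so the diameter of $D$ is indeed $3$, consistent with the first claim. Finally, since $K_4$ is vertex-transitive, relabelling carries this construction to any chosen vertex, so the conclusion holds for every $v$.

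I do not anticipate a serious obstacle, as the whole argument reduces to a finite computation on a four-vertex digraph. The only point requiring care is to confirm that no vertex other than $v$ forces a longer distance to or from $v$, and this is precisely what the explicit list of out- and in-distances above settles.
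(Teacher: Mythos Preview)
Your argument is correct. The paper does not actually supply a proof of this proposition: it simply cites Plesn\'ik, Boesch--Tindell and Maurer for the general $K_n$ claim and then states the $K_4$ refinement as ``straightforward'' without further justification. Your explicit tournament on $\{v,a,b,c\}$ and the accompanying distance checks furnish exactly the verification the paper omits, and the appeal to vertex-transitivity to cover an arbitrary choice of $v$ is the natural way to finish. There is nothing to compare against beyond noting that your write-up is more detailed than what the paper provides.
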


\subsection{Oriented diameter and number of blocks}
\label{sect: oriented diameter and number of blocks}

In this section we present a sharp upper bound on the oriented diameter of a bridgeless graph in terms of order and number of blocks. As a corollary, we obtain a sharp upper bound on the oriented diameter in terms of order and number of cut vertices. 

\begin{theo}
	\label{thm: bound on order ito blocks}
	For every bridgeless graph $G$ with $n$ vertices and $p$ blocks, 
	\[\overrightarrow{{\rm diam}}(G) \leq n- \bigg \lfloor \frac{p}{2} \bigg \rfloor .\]
\end{theo}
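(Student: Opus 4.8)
The plan is to argue by induction on the number of blocks $p$. The bound is trivial for $p \le 3$: since $n-\lfloor p/2\rfloor \ge n-1$ in that range, and every strong orientation of a graph on $n$ vertices has diameter at most $n-1$, Robbins' theorem already supplies an orientation meeting the bound. So I assume $p \ge 4$ and that the statement holds for all bridgeless graphs with fewer blocks.

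For the inductive step I would isolate an end block $B$ with cut vertex $v$ and internal-vertex set $U$, writing $t:=|U|\ge 2$ (by Proposition \ref{prop: n 2p+1}), and set $G'=G-U$, a bridgeless graph with $n-t$ vertices and $p-1$ blocks. The inductive hypothesis gives a strong orientation $D'$ of $G'$ with $\mathrm{diam}(D')\le (n-t)-\lfloor (p-1)/2\rfloor$, which I extend to a strong orientation $D$ of $G$ by orienting $B$ so that the in- and out-eccentricities of $v$ within $B$ are as small as possible. Since every directed path leaving $B$ must pass through $v$, for $u\in U$ and $w\in V(G')$ one has exactly $d_D(u,w)=d_B(u,v)+d_{D'}(v,w)$ and $d_D(w,u)=d_{D'}(w,v)+d_B(v,u)$, while all distances internal to $G'$ or to $B$ are unchanged. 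A short computation, using only $\mathrm{ecc}_B(v)\le t$ (valid for any strong orientation of a $2$-connected graph of order $t+1$) together with $n\ge 2p+1$ to control the within-$B$ pairs, then closes the induction when $p$ is \emph{odd}.

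When $p$ is \emph{even} the target bound drops by one further unit and the estimate above is off by exactly $1$; closing this gap requires orienting $B$ so that $\mathrm{ecc}_B(v)\le t-1$. This is the crux. Such an orientation exists whenever $B$ is \emph{not} a cycle: here I would take an ear decomposition of $B$, or use the two edge-disjoint $v$-paths idea behind Lemma \ref{la: miss at least one vertex}, to route all of $B$ so that $v$ is within distance $t-1$ of (and from) every internal vertex. For a cycle block, however, the unique strong orientation forces $\mathrm{ecc}_B(v)=t$, so the per-block argument provably fails, and the saving must instead come globally.

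To handle this I would strengthen the inductive hypothesis to also bound the eccentricity of the attaching cut vertex by $n-\lceil p/2\rceil$. This is exactly the slack the cross-terms need: using the identity $\lfloor (p'+1)/2\rfloor=\lceil p'/2\rceil$ with $p'=p-1$, the hypothesis $\mathrm{ecc}_{D'}(v)\le (n-t)-\lceil (p-1)/2\rceil$ gives $d_D(u,w)\le t+(n-t)-\lfloor p/2\rfloor = n-\lfloor p/2\rfloor$ even for a cyclic $B$. The main obstacle is then to \emph{maintain} this auxiliary eccentricity bound throughout the build-up, since attaching a block can, in the worst case, raise the eccentricity of a distant cut vertex by the full $t$, whereas the budget grows only by $t-1$ when the parity is unfavourable. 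I expect this to force a rooted, direction-aware version of the invariant (controlling eccentricities toward a central block and building from the leaves inward), and in the all-cycles case to require balancing the forward and backward traversals of each cycle—where the two cut vertices satisfy $d(a,b)+d(b,a)=\ell(B)$—with the surplus quantified via $n\ge 2p+1$. Verifying that this strengthened invariant is preserved at every attachment, particularly for cycle blocks under even parity, is where the real work of the proof lies.
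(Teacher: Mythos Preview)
Your proposal is not a proof but a plan, and the plan does not close. The odd-$p$ step is fine: peeling one end block and using $\mathrm{ecc}_B(v)\le t$ together with $\lfloor(p-1)/2\rfloor=\lfloor p/2\rfloor$ works exactly as you say. The even-$p$ case, however, is left open. You reduce it to two obligations---(a) orienting a non-cycle end block so that $\mathrm{ecc}_B(v)\le t-1$, and (b) carrying an auxiliary eccentricity invariant $\mathrm{ecc}_{D'}(v)\le n'-\lceil p'/2\rceil$ through the induction when the end block is a cycle---and you explicitly concede that verifying (b) ``is where the real work of the proof lies.'' That is the gap. The difficulty is genuine: the vertex $v$ whose eccentricity you need to control is an \emph{arbitrary} vertex of $G'$ (it is whichever cut vertex the next end block happens to hang from), so the strengthened hypothesis is really ``for every vertex there exists an orientation with small eccentricity at that vertex,'' which is a different and harder statement than the theorem itself. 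Your rooted/direction-aware balancing idea for chains of cycles may be salvageable, but nothing you wrote establishes it.

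The paper avoids the parity trap altogether by never peeling a single block. Its induction has three cases. If some cut vertex lies in at least three blocks, it splits $G$ into three pieces at that vertex and uses $n_3\ge 2b_3+1$ to absorb the rounding loss. Otherwise, if two blocks with exactly two cut vertices are adjacent, it contracts \emph{both} of them simultaneously (so $p$ drops by $2$, and $\lfloor(p-2)/2\rfloor=\lfloor p/2\rfloor-1$ regardless of parity), using Lemma~\ref{la: miss at least one vertex} to orient the contracted pair with the required saving of one. In the remaining case the block graph is a tree with no two adjacent degree-$2$ vertices; Lemma~\ref{la: number of leaves} then forces at least $(p+5)/4$ end blocks, each contributing two internal vertices missed by any diametral path, and a direct count gives the bound with no induction at all. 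The two-blocks-at-a-time trick is precisely the device that your single-block induction lacks.
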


\begin{proof}
	We prove the statement by induction on $p$. The statement clearly holds for $p \leq 3$, since every strong orientation of $G$ has diameter at most $n-1$, so we assume that $p \geq 4$.\\[1mm]
	
	{\bf Case 1:} $G$ contains a cut vertex that belongs to more than two blocks.\\
	Let $v$ be such a vertex. Let $Q_1$ and $Q_2$ be two branches at $v$ and $Q_3$ the union of the remaining branches of $G$. For $i=1,2,3$, let $n_i=|V(Q_i)|$, $b_i$ be the number of blocks in $Q_i$ and let $\overrightarrow{Q_i}$ be a strong orientation of $Q_i$ of diameter $\overrightarrow{{\rm diam}}(Q_i)$. By the induction hypothesis we have ${\rm diam}(\overrightarrow{Q_i}) \leq n_i - \lfloor \frac{b_i}{2} \rfloor$. Let $D = \overrightarrow{Q_1} \cup \overrightarrow{Q_2} \cup \overrightarrow{Q_3}$. Then $D$ is a strong orientation of $G$. Since a path between two vertices of $D$ contains edges of at most two of the $\overrightarrow{Q_i}$, we have that vertices in different branches at $v$ pass through $v$. Thus
	\[{\rm diam}(D) \leq \max\{\overrightarrow{{\rm diam}}(Q_1) + \overrightarrow{{\rm diam}}(Q_2), \overrightarrow{{\rm diam}}(Q_2)+\overrightarrow{{\rm diam}}(Q_3), \overrightarrow{{\rm diam}}(Q_1)+\overrightarrow{{\rm diam}}(Q_3)\}.\]
	We may assume that the maximum above is attained by $\overrightarrow{{\rm diam}}(Q_1) + \overrightarrow{{\rm diam}}(Q_2)$. Then
	\begin{equation}
	\label{eqn: diam(Q_1)+diam(Q_2)}
			{\rm diam}(D) \leq \overrightarrow{{\rm diam}}(Q_1)+\overrightarrow{{\rm diam}}(Q_2) \leq n_1+n_2- \big( \big\lfloor \frac{b_1}{2} \big \rfloor + \big \lfloor \frac{b_2}{2} \big \rfloor \big).
	\end{equation}
	Furthermore, we have that $n=n_1+n_2+n_3-2$ and $p=b_1+b_2+b_3$. By Proposition \ref{prop: n 2p+1}, $n_3 \geq 2b_3+1$. Hence, from (\ref{eqn: diam(Q_1)+diam(Q_2)}), we have that
		\begin{equation}
		\label{eqn: diam(G)}
		\begin{split}
			{\rm diam}(D) &\leq n-n_3+2- \big( \big\lfloor \frac{b_1}{2} \big \rfloor + \big \lfloor \frac{b_2}{2} \big \rfloor \big) \\
			&\leq n-2b_3+1- \big( \big\lfloor \frac{b_1}{2} \big \rfloor + \big \lfloor \frac{b_2}{2} \big \rfloor \big).
		\end{split}
	\end{equation}

For $i=1,2$, $\lfloor \frac{b_i}{2} \rfloor \geq \frac{b_i-1}{2}$, and so (\ref{eqn: diam(G)}) becomes
	\begin{equation*}
	\begin{split}
		{\rm diam}(D) &\leq n-2b_3+1- \big(\frac{b_1-1}{2} + \frac{b_2-1}{2} \big)\\
		&=n-2b_3-\frac{p-b_3}{2}+2\\
		&=n-\frac{p}{2} - \frac{3}{2}b_3+2.
	\end{split}
\end{equation*}
For $b_3 \geq 2$ we are done, hence we assume that $b_3=1$. Then ${\rm diam}(D) \leq n-\frac{p-1}{2}$. If $p$ is odd, then we are done. If $p$ is even, then $b_1$ and $b_2$ have different parities. Without loss of generality, suppose $b_1$ is even and $b_2$ is odd. Then $\lfloor \frac{b_1}{2} \rfloor = \frac{b_1}{2}$ and $\lfloor \frac{b_2}{2} \rfloor = \frac{b_2-1}{2}$. From (\ref{eqn: diam(G)}) and substituting $b_1+b_2=p-b_3$, we get
\[{\rm diam}(D) \leq n-2b_3+1- \big(\frac{b_1}{2} + \frac{b_2-1}{2} \big)=n-\frac{p}{2},\]
and since $p$ is even, ${\rm diam}(D) \leq n- \lfloor \frac{p}{2} \rfloor$. Thus, $\overrightarrow {\rm diam}(G) \leq n- \lfloor \frac{p}{2} \rfloor$.\\[1mm]

	{\bf Case 2:} No cut vertex belongs to more than two blocks, and there exists a pair of blocks such that each have exactly two cut vertices and share a cut vertex.\\
	Let $A$ and $B$ be such a pair of blocks, where $x$ is the cut vertex of $G$ belonging to $A$, $z$ the cut vertex of $G$ belonging to $B$ and $y$ the cut vertex common to both $A$ and $B$. Let $\vert V(A) \vert = n_A$ and $\vert V(B) \vert = n_B$. Identify all the vertices in $A$ and $B$ to a new vertex $y'$. Let $G'$ be the graph obtained from $G$. This will not create any multiple edges in $G'$ since there may be vertices in $G-(V(A) \cup V(B))$ that are adjacent to $x$ or adjacent to $z$ but no vertex will be adjacent to both $x$ and $z$. We delete any loops that may arise. Notice that $\vert V(G') \vert = n-(n_A+n_B-2)$ and $G'$ contains $p-2$ blocks. \\
	By the induction hypothesis there exists a strong orientation $D'$ of $G'$ such that 
	\begin{equation}
		\label{eqn:diamG'}
	{\rm diam}(D') \leq n-(n_A+n_B-2)-\big \lfloor \frac{p-2}{2} \big \rfloor = n - \lfloor \frac{p}{2} \rfloor - (n_A+n_B-3).
	\end{equation}
	We extend the orientation $D'$ of $G'$ to a strong orientation $D$ of $G$ as follows. All edges in $D'$ that are not incident with $y'$ retain their orientations in $D$. 
Let $S_A$ ($S_B$) be the component of $G-(V(A)\cup V(B))$ containing the neighbours of 
$x$ ($z$) in $G$ that are not in $A$ ($B$). Then $y'$ is the cut vertex common to $S_A$ and 
$S_B$ in $G'$. Let $v_A \in S_A$ and $v_B \in S_B$. If an edge $v_A y'$ is oriented as 
$\overrightarrow{v_Ay'}$ ($\overrightarrow{y'v_A}$) in $G'$, then the corresponding edge, 
$v_Ax$ in $G$ is oriented as $\overrightarrow{v_Ax}$ ($\overrightarrow{xv_A}$). Similarly, if an 
edge $v_B y'$ is oriented as $\overrightarrow{v_By'}$ ($\overrightarrow{y'v_B}$) in $G'$, then 
the corresponding edge $v_Bz$ in $G$ is oriented as $\overrightarrow{v_Bz}$ ($
\overrightarrow{zv_B}$). 
We now orient the edges in $A\cup B$. 
Clearly, $A \cup B$ is a bridgeless graph of order $n_A + n_B -1$, and $x$ and $z$ are not 
in the same block of $A \cup B$. By Lemma \ref{la: miss at least one vertex}, there exists
a strong orientation $D''$ of $A\cup B$ with $d_{D''}(x,z), d_{D''}(z,x) \leq n_A+n_B-3$.
In $D$, we give every edge of $A\cup B$ an orientation as in $D''$. Then 
	\begin{equation}
		\label{eqn:distance y and z}
		d_D(x,z) \leq n_A+n_B-3 \qquad \text{and} \qquad d_D(z,x) \leq n_A+n_B-3.
	\end{equation}
	Let $a$ and $b$ be two vertices of $G$ with $d_D(a,b) = {\rm diam}(D)$. To prove the theorem in Case 2, it suffices to show that 
	\begin{equation}
		\label{eqn:distance <n-p/2}
			d_D(a,b) \leq n -\lfloor \frac{p}{2} \rfloor.
	\end{equation}
	
		{\bf Case 2.i:} $a$ and $b$ are in the same component of $G-(V(A) \cup V(B))$.\\
		In this case, $d_D(a,b)=d_{D'}(a,b)$ and so from (\ref{eqn:diamG'}) we get that
		\[d_D(a,b) \leq {\rm diam}(D') \leq n- \lfloor \frac{p}{2} \rfloor - (n_A+n_B-3),\]
		which implies (\ref{eqn:distance <n-p/2}) since $n_A, n_B \geq 3$, by Proposition \ref{prop: n 2p+1}.	\\[1mm]
		
		{\bf Case 2.ii:} $a$ and $b$ are in different components of $G-(V(A) \cup V(B))$.\\
		Without loss of generality, let $a \in S_A$ and $b \in S_B$. Then $d_D(a,x)=d_{D'}(a,y')$ and $d_D(z,b)=d_{D'}(y',b)$. Note that $d_{D'}(a,b)=d_{D'}(a,y')+d_{D'}(y', b)$. From (\ref{eqn:diamG'}) and (\ref{eqn:distance y and z}),
		\begin{equation*}
			\begin{split}
				d_D(a,b) &= d_D(a,x)+d_D(x,y)+d_D(y,z)+d_D(z,b)\\
				&\leq d_{D'}(a,b)+d_D(x,z)\\
			&\leq n - \lfloor \frac{p}{2} \rfloor - (n_A+n_B-3) +n_A+n_B-3\\
			&=n-\lfloor \frac{p}{2} \rfloor,
		\end{split}
	\end{equation*}
implying (\ref{eqn:distance <n-p/2}).\\[1mm]

			{\bf Case 2.iii:} $a,b \in V(A)\cup V(B)$.\\
			The subdigraph of $D$ induced by $V(A)\cup V(B)$ is clearly strong and has \\
			$n_A+n_B-1$ vertices. Hence, $d_D(a,b) \leq n_A+n_B-2$. Since $G'$ has $n-(n_A + n_B) +2$ vertices and $p-2$ blocks, we have, by Proposition \ref{prop: n 2p+1}, that $n-(n_A+n_B)+2 \geq 2(p-2)+1$ and so $n-(n_A+n_B) \geq 2(p-3)+1$. Hence,
			\[d_D(a,b) \leq n_A+n_B-2\leq n-2p+3,\]
			implying (\ref{eqn:distance <n-p/2}). \\[1mm]
			
			{\bf Case 2.iv:} $a\in V(A) \cup V(B)$ and $b \in G-(V(A)\cup V(B))$.\\
			Without loss of generality, let $b \in V(S_B)$. A path from $a$ to $b$ passes through $z$, i.e., $d_D(a,b)=d_D(a,z)+d_D(z,b)$. Now $d_D(a,z) \leq |V(A) \cup V(B)|-1$. Also, $d_D(z,b)=d_{D'}(y',b)$. $S_A$ contains a vertex $a'$ that is not an in-neighbour of $y'$, so $d_{D'}(a',y') \geq 2$. A shortest path from $a'$ to $b$ passes through $y'$. Since $d_{D'}(a',b) \leq {\rm diam}(D')$, then $d_{D'}(y',b) \leq {\rm diam}(D')-2$, since $a'$ is not an in-neighbour of $y'$. Hence, by (\ref{eqn:diamG'}),
				\begin{equation*}
				\begin{split}
					d_D(a,b) &\leq n_A+n_B-1+{\rm diam}(D')-2\\
					&\leq n_A+n_B-1+n-n_A-n_B+3-\lfloor \frac{p}{2} \rfloor -2\\
					&=n-\lfloor \frac{p}{2} \rfloor,
				\end{split}
			\end{equation*}
		which implies (\ref{eqn:distance <n-p/2}).\\[1mm]
			
	{\bf Case 3:} No cut vertex belongs to more than two blocks, and no two blocks each having exactly two cut vertices share a cut vertex.\\
	Let $T$ be the block graph of $G$. Then $\vert V(T) \vert = p$. Since no cut vertex of $G$ belongs to more than two blocks, by Lemma \ref{la: cut-block graph a tree}, $T$ is a tree. If $B$ is a block of $G$, then the degree of $B$ in $T$ is the number of cut vertices of $G$ in $B$. Hence, by the defining condition of Case 3, $T$ does not contain two adjacent vertices of degree two. Let $p_1$ be the number of leaves in $T$. By Lemma \ref{la: number of leaves}, $p_1  \geq \frac{p+5}{4}$. Note that the leaves of $T$ correspond to the end blocks of $G$, hence $G$ has exactly $p_1$ end blocks. Let $D$ be any strong orientation of $G$. Let $P$ be a shortest path between two vertices $a$ and $b$ with $d_D(a,b) = {\rm diam}(D)$. There are at most two end blocks that have an internal vertex that is on $P$. Hence, $P$ misses the internal vertices of at least $p_1-2$ end blocks. Since each end block has at least two internal vertices, a shortest path misses at least $2(p_1-2)$ vertices in $G$. Hence,
	\[{\rm diam}(D) = \ell(P) \leq n-2(p_1-2)-1 =n-2p_1+3. \]
	If $p_1 \geq \frac{p+6}{4}$, then
	\[{\rm diam}(D) \leq n-2\big(\frac{p+6}{4}\big)+3 =n- \frac{p}{2} \leq n-\lfloor \frac{p}{2} \rfloor.\]
	Otherwise, with $p_1 = \frac{p+5}{4}$, then
	\[ {\rm diam}(D) \leq n-2\big(\frac{p+5}{4}\big)+3 =n- \frac{p-1}{2}. \]

 Since $p_1$ is an integer, $p$ is odd, and so ${\rm diam}(D) \leq n-\lfloor \frac{p}{2} \rfloor$. This concludes the proof.
\end{proof}

We will now show that Theorem \ref{thm: bound on order ito blocks} is sharp for $p\geq 2$. For $2 \leq p \leq \frac{n-1}{2}$, consider the following construction of a graph $G_{n,p}$ with $n$ vertices and $p$ blocks that attains the bound in Theorem \ref{thm: bound on order ito blocks}. For $p=2$, $G_{n,2}$ consists of two cycles, $C_1$ and $C_2$ that share a cut vertex, $x \in V(G_{n,2})$. Let $\vert V(C_1) \vert = 3$ and $\vert V(C_2) \vert = n-2$ and let $D$ be any strong orientation of $G_{n,2}$. Since $D$ is strong, $C_1$ and $C_2$ are oriented as directed cycles. Let $c_1 \in V(C_1)$ be the out-neighbour of $x$ and $c_2 \in V(C_2)$ the in-neighbour of $x$. Then $d_D(c_1,x)=2$ and $d_D(x, c_2)=n-3$. Hence, $d_D(c_1, c_2) = 2+(n-3) = n-1$, and so ${\rm diam}(D)=n-1$. \\

For $p \geq 3$, Let $P$ be the path $a_0, a_1, \ldots a_{p-1}$. For $0 \leq i <p-1$, add a vertex $b_i$ and join it to $a_i$ and $a_{i+1}$. Thus, $a_i$, $b_i$ and $a_{i+1}$ form a $3$-cycle for each edge of $P$. Define $T_{p-1}$ to be the path $P$, together with the vertices $b_i$ and edges $a_ib_i$ and $b_ia_{i+1}$, $0 \leq i <p-1$. Let $C_{p-1}$ be a cycle of length $n-2(p-1)$ and identify a vertex of $C_{p-1}$ with $a_{p-1}$ to obtain the graph $G_{n,p}$.  Define $G_{n,p}$ to be the graph $G_{n,p}=T_{p-1} \cup C_{p-1}$. Figure \ref{pic: G5} shows a strong orientation of $G_{n,5}$.

 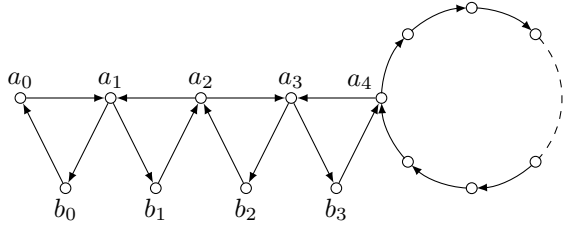
\begin{figure}[h]
	\begin{center}
		\begin{tikzpicture}
			[scale=0.6,inner sep=0.5mm, 
			vertex/.style={circle,draw}, 
			thickedge/.style={line width=2pt}] 
			
			\def\r{2} 
			\foreach \point in {(90:\r),(135:\r),(45:\r),(-45:\r),(-90:\r),(-135:\r)}
			\draw[fill=white] \point circle(.12);
			
			\node[vertex] [label=above:$a_0$] (1) at (-10,0) [fill=white] {};
			\node[vertex] [label=below:$b_0$] (2) at (-9,-2) [fill=white] {};
			\node[vertex] [label=above:$a_1$] (3) at (-8,0) [fill=white] {};
			\node[vertex]  [label=below:$b_1$] (4) at (-7,-2) [fill=white] {};
			\node[vertex] [label=above:$a_2$] (5) at (-6,0) [fill=white] {};
			\node[vertex] [label=below:$b_2$] (6) at (-5,-2) [fill=white] {};
			\node[vertex] [label=above:$a_3$] (7) at (-4,0) [fill=white] {};
			\node[vertex] [label=below:$b_3$] (8) at (-3,-2) [fill=white] {};
			\node[vertex] [label=above left:$a_4$] (9) at (-2,0) [fill=white] {};
			\draw[-latex] (2) -- (1);
			\draw[-latex] (1) -- (3);
			\draw[-latex] (3) -- (2);
			\draw[-latex] (5) -- (3);
			\draw[-latex] (3) -- (4);
			\draw[-latex] (4) -- (5);
			\draw[-latex] (5) -- (7);
			\draw[-latex] (7) -- (6);
			\draw[-latex] (6) -- (5);
			\draw[-latex] (9) -- (7);
			\draw[-latex] (7) -- (8);
			\draw[-latex] (8) -- (9);
			\draw[-latex] (177:\r) arc (177:138:\r{} and 2);
			\draw[-latex] (132:\r) arc (132:93:\r{} and 2);
			\draw[-latex] (87:\r) arc (87:48:\r{} and 2);
			\draw[-latex] (-48:\r) arc (-48:-87:\r{} and 2);
			\draw[-latex] (-93:\r) arc (-93:-132:\r{} and 2);
			\draw[-latex] (-138:\r) arc (-138:-177:\r{} and 2);
			\draw [dashed] (42:\r) arc (42:-42:\r{} and 2);
		\end{tikzpicture}
		\caption{A strong orientation of the graph $G_{n,5}$.}
		\label{pic: G5}
	\end{center}
\end{figure}

\begin{prop}
\label{prop: oriented diameter order and number of blocks sharpness}
	Let $G_{n,p}$ be the graph constructed above. Then
	\[\overrightarrow{{\rm diam}}(G_{n,p}) = n-\big\lfloor \frac{p}{2} \big\rfloor.\]
\end{prop}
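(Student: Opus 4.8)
The plan is to pair an immediate upper bound with a matching lower bound that holds for \emph{every} strong orientation. By Theorem~\ref{thm: bound on order ito blocks} we already have $\overrightarrow{{\rm diam}}(G_{n,p}) \le n - \lfloor p/2\rfloor$, so it suffices to show ${\rm diam}(D) \ge n - \lfloor p/2\rfloor$ for an arbitrary strong orientation $D$ of $G_{n,p}$. The case $p=2$ was already settled in the discussion preceding the proposition, where every strong orientation of $G_{n,2}$ was shown to have diameter exactly $n-1 = n - \lfloor p/2\rfloor$, so I assume $p\ge 3$. The starting observation is that in any strong orientation each block must itself be strongly oriented, and the only strong orientation of a cycle is a directed cycle; hence every triangle $T_i$ (on $a_i,b_i,a_{i+1}$) and the cycle $C_{p-1}$ are directed cycles in $D$.

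For $0\le i\le p-2$ write $f_i = d_D(a_i,a_{i+1})$ and $g_i = d_D(a_{i+1},a_i)$ for the two distances inside the directed triangle $T_i$; then $\{f_i,g_i\}=\{1,2\}$ and $f_i+g_i=3$. The vertices $a_1,\dots,a_{p-2}$ are cut vertices, so any directed path across the chain must pass through $a_1,a_2,\dots,a_{p-1}$ in order, and inside $T_i$ the distance from $a_i$ to $a_{i+1}$ is exactly $f_i$, with no shortcut available. The crucial device concerns the end block $T_0$: whatever its orientation, exactly one of $a_0,b_0$ lies at distance $2$ from $a_1$, and exactly one lies at distance $2$ from $a_1$ in the reverse sense. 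Let $u\in\{a_0,b_0\}$ satisfy $d_D(u,a_1)=2$ and $u'\in\{a_0,b_0\}$ satisfy $d_D(a_1,u')=2$. Writing $w$ and $w'$ for the out-neighbour and in-neighbour of $a_{p-1}$ on $C_{p-1}$, and $\ell = n-2(p-1)$ for its length, we have $d_D(a_{p-1},w') = d_D(w,a_{p-1}) = \ell-1$, and therefore
\[
d_D(u,w') = 2 + \sum_{i=1}^{p-2} f_i + (\ell-1), \qquad d_D(w,u') = (\ell-1) + \sum_{i=1}^{p-2} g_i + 2 ,
\]
both equalities being exact because each distance is realised by concatenating the forced shortest sub-paths across the intervening cut vertices.

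Adding the two displayed distances, the orientation-dependence cancels: using $f_i+g_i=3$ on the $p-2$ interior triangles and $\ell = n-2(p-1)$, the sum collapses to the constant $d_D(u,w') + d_D(w,u') = 4 + 2(\ell-1) + 3(p-2) = 2n-p$. Consequently $\max\{d_D(u,w'),d_D(w,u')\} \ge \lceil (2n-p)/2\rceil = n-\lfloor p/2\rfloor$, which yields ${\rm diam}(D)\ge n-\lfloor p/2\rfloor$ and completes the lower bound. I expect the main obstacle to be precisely the choice that makes the bound tight: the naive pair consisting of the leaf $a_0$ and the far vertex of the cycle gives only $n-\lfloor p/2\rfloor-1$ when $p$ is odd, so one must instead anchor \emph{both} the forward and the backward path at the distance-$2$ internal vertex of the end block $T_0$ (gaining one unit there, independently of how $T_0$ is oriented) and then exploit the orientation-independence of the forward-plus-backward sum to finish via the ceiling. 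Confirming that these concatenated routes are genuinely shortest, so that the displayed identities hold with equality rather than merely as inequalities, is the step that will require the most care.
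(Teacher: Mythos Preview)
Your argument is correct and follows essentially the same route as the paper: both rely on each triangle and the end cycle being forced into directed cycles, compute a forward-plus-backward sum that is orientation-independent, and apply the ceiling to extract the lower bound. The only cosmetic difference is that the paper first takes the maximum over $d_D(a_1,a_{p-1})$ and $d_D(a_{p-1},a_1)$ (summing to $3(p-2)$) and then appends the end contributions, whereas you fold the end contributions in before summing to $2n-p$; the content is the same.
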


\begin{proof}
	Let $D$ be a strong orientation of $G_{n,p}$ of minimum diameter. Since each $a_i, 1 \leq i \leq p-1$, is a cut vertex of $G_{n,p}$, a path from $a_1$ to $a_{p-1}$ passes through the vertices $a_1, a_2, \ldots, a_{p-1}$, in that order. Similarly, an $(a_{p-1}, a_1)$-path passes through the vertices $a_{p-1}, a_{p-2}, \ldots , a_1$, in that order. Hence,
	\begin{equation}
		\label{eqn: d(a1,ap-1)}
		d_D(a_1,a_{p-1})=d_D(a_1, a_2)+d_D(a_2, a_3) + \cdots + d_D(a_{p-2}, a_{p-1}),
	\end{equation}
and
	\begin{equation}
	\label{eqn: d(ap-1,a1)}
	d_D(a_{p-1},a_1)=d_D(a_{p-1}, a_{p-2})+d_D(a_{p-2},a_{p-3}) + \cdots + d_D(a_2,a_1).
\end{equation}
Since each edge of a $3$-cycle through $a_ia_{i+1}$ is either on the $(a_i, a_{i+1})$-path, or on the $(a_{i+1}, a_i)$-path in $D$, we have that $d_D(a_i, a_{i+1})+d_D(a_{i+1},a_i)=3$. From (\ref{eqn: d(a1,ap-1)}) and (\ref{eqn: d(ap-1,a1)}) we get that 
	\begin{equation*}
	d_D(a_1,a_{p-1})+d_D(a_{p-1},a_1)=3(p-2),
\end{equation*}
and so 
\begin{equation}
\label{eqn: max distance}
	\max \{d_D(a_1,a_{p-1}), d_D(a_{p-1}, a_1)\} \geq \bigg \lceil \frac{3(p-2)}{2} \bigg \rceil.
\end{equation}
Without loss of generality, we may assume that $d_D(a_1,a_{p-1}) \geq d_D(a_{p-1}, a_1)$. Let $q_1$ be the out-neighbour of $a_1$, which is adjacent to $a_0$, and $q_{p-1}$ the in-neighbour of $a_{p-1}$ in $V(C_{p-1})$. Note that since $\vert V(C_{p-1}) \vert = n-2(p-1)$, then $d_D(a_{p-1},q_{p-1})=n-2p+1$. We also have that $d_D(q_1,a_1)=2$. Hence,
\begin{equation*}
	\begin{split}
				\overrightarrow{{\rm diam}}(G_{n,p}) &\geq d_D(q_1,q_{p-1})= d_D(q_1,a_1)+d_D(a_1,a_{p-1})+d_D(a_{p-1},q_{p-1})\\
				&\geq 2+ \bigg \lceil \frac{3(p-2)}{2} \bigg \rceil +n-2p+1\\
				&=  n - \lfloor \frac{p}{2} \rfloor.
		\end{split}
\end{equation*}

Since $\overrightarrow{{\rm diam}}(G_{n,p}) \leq n - \lfloor \frac{p}{2} \rfloor$, by Theorem \ref{thm: bound on order ito blocks}, we have that $\overrightarrow{{\rm diam}}(G_{n,p}) = n - \lfloor \frac{p}{2} \rfloor$. So, we have shown that $G_{n,p}$ is an extremal graph, and that Theorem \ref{thm: bound on order ito blocks} is indeed sharp for all values of $p \geq 2$.
\end{proof}

It is interesting to note the comparison between our extremal graph and that of Bessy, Dross, Hri\v{n}\'{a}kov\'{a}, Knor and \v{S}krekovski \cite{bessy}, who showed that among all graphs on $n$ vertices which have $p \geq 2$ blocks, the maximum Wiener index is attained by a graph composed of two cycles joined by a path.\\

We now apply Theorem \ref{thm: bound on order ito blocks} to obtain a bound on the oriented diameter of a bridgeless graph in terms of order and number of cut vertices.

\begin{coro}
	\label{coro: cut vertices}
		For every bridgeless graph $G$, with $n$ vertices and $s$ cut vertices, 
	\[\overrightarrow{{\rm diam}}(G) \leq n- \bigg \lfloor \frac{s+1}{2} \bigg \rfloor,\]
	and this bound is sharp.
\end{coro}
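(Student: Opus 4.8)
The plan is to derive the upper bound directly from Theorem \ref{thm: bound on order ito blocks} by relating the number of cut vertices $s$ to the number of blocks $p$, and then to obtain sharpness for free from the family $G_{n,p}$ already constructed in Proposition \ref{prop: oriented diameter order and number of blocks sharpness}.

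First I would invoke the elementary fact, already noted in the proof of Proposition \ref{prop: n 2p+1}, that a graph with $p$ blocks has at most $p-1$ cut vertices, i.e.\ $p \geq s+1$. (One way to see this is via the block-cut tree: it is a tree on the $p$ block-nodes and $s$ cut-vertex-nodes, every cut-vertex-node has degree at least $2$, and summing degrees over the cut-vertex-nodes counts all $p+s-1$ edges, so $p+s-1 \geq 2s$, giving $s \leq p-1$.) Since the floor function is nondecreasing and $p \geq s+1$, we get $\lfloor \frac{p}{2} \rfloor \geq \lfloor \frac{s+1}{2} \rfloor$. Combining this inequality with Theorem \ref{thm: bound on order ito blocks} yields
\[ \overrightarrow{{\rm diam}}(G) \leq n - \bigg\lfloor \frac{p}{2} \bigg\rfloor \leq n - \bigg\lfloor \frac{s+1}{2} \bigg\rfloor, \]
which is exactly the claimed bound.

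For sharpness I would reuse the graphs $G_{n,p}$. In $G_{n,p}$ the cut vertices are precisely $a_1, a_2, \ldots, a_{p-1}$, so it has $s = p-1$ cut vertices, whence $\lfloor \frac{s+1}{2} \rfloor = \lfloor \frac{p}{2} \rfloor$. By Proposition \ref{prop: oriented diameter order and number of blocks sharpness},
\[ \overrightarrow{{\rm diam}}(G_{n,p}) = n - \bigg\lfloor \frac{p}{2} \bigg\rfloor = n - \bigg\lfloor \frac{s+1}{2} \bigg\rfloor, \]
so taking $p = s+1$ shows the bound is attained for every admissible value of $s$ (that is, $s \geq 1$).

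The argument is short and no step is a genuine obstacle; the only points requiring care are checking that the monotone-floor step runs in the correct direction (it does, because the block-cut tree inequality gives $p \geq s+1$, not $p \leq s+1$) and confirming that $G_{n,p}$ has exactly $p-1$ cut vertices. For the latter one verifies that the path endpoint $a_0$ lies in a single block and each $b_i$ is an internal vertex of its triangle, so none of these is a cut vertex, while each of $a_1, \ldots, a_{p-1}$ separates $G_{n,p}$.
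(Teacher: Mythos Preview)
Your proposal is correct and follows essentially the same approach as the paper: derive $p \geq s+1$ from the block--cut tree, apply Theorem~\ref{thm: bound on order ito blocks}, and invoke the extremal family $G_{n,s+1}$ for sharpness. The paper's own proof is in fact terser than yours, simply citing that $s \leq p-1$ and pointing to $G_{n,s+1}$; your additional verification that $G_{n,p}$ has exactly $p-1$ cut vertices and your explicit block--cut tree count are welcome details but not new ideas.
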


\begin{proof}
	Let $G$ be a bridgeless graph of order $n$ with $s$ cut vertices. Let $p$ be the number of blocks of $G$. By Theorem \ref{thm: bound on order ito blocks}, $\overrightarrow{{\rm diam}}(G) \leq n - \lfloor \frac{p}{2} \rfloor$. However, for $p$ blocks, there are at most $p-1$ cut vertices. Hence, $s \leq p-1$, and our result follows.
\end{proof}

	The bound in Corollary \ref{coro: cut vertices} is sharp for all values of $n$ and $s$ with $1 \leq s \leq \frac{n-3}{2}$, as demonstrated by the graph $G_{n, s+1}$ constructed above.  
	
\subsection{Oriented diameter of block graphs}
\label{sec: oriented diameter of block graphs }

	We conclude the paper by determining the maximum oriented diameter of a block graph of given order. Recall that a block graph is a graph in which every block induces a complete graph. Unlike in the bound in Theorem \ref{thm: bound on order ito blocks}, here we do not prescribe the number of blocks or cut vertices.
	
		\begin{theo}
		\label{theo: block bound}
		For every bridgeless block graph $G$ with $n$ vertices,
		\begin{equation}
			\label{eqn: block bound}
			\overrightarrow{{\rm diam}}(G) \leq \begin{cases}
				\lfloor \frac{3n}{4} \rfloor& \textnormal{for $n$ even,} \\ 
				\lfloor \frac{3(n+1)}{4} \rfloor & \textnormal{for $n$ odd}. \\ 
			\end{cases}
		\end{equation}
		
	\end{theo}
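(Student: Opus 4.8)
\emph{Overview.} I would prove the bound by induction on the number of blocks, distinguishing two regimes according to the shape of the block structure of $G$, and using Lemma~\ref{la: tree lemma} with $k=2$ together with Proposition~\ref{prop: orientation Kn} as the orientation tools. The base case is a single block: a clique $K_n$ with $n\ge 3$ has an orientation of diameter at most $3$ by Proposition~\ref{prop: orientation Kn}, and $3$ is at most the claimed bound for every $n\ge 3$. For the inductive step, if no cut vertex lies in more than two blocks and no block contains more than two cut vertices, then by Lemma~\ref{la: cut-block graph a tree} the blocks are arranged in a path (a ``caterpillar of cliques''); otherwise the block structure branches and I would split $G$ at a branch point and recurse.

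\emph{Path case.} Here $G$ is a path of cliques $B_1,\dots,B_r$ with $B_i=K_{m_i}$, $m_i\ge 3$, where consecutive cliques meet in cut vertices $c_1,\dots,c_{r-1}$. Choosing internal vertices $w_0\in V(B_1)$ and $w_r\in V(B_r)$ (which exist since $m_1,m_r\ge 3$), I would let $T$ be the path $w_0\,c_1\,c_2\cdots c_{r-1}\,w_r$; as each consecutive pair lies in a common clique, $T$ is a subtree of order $p=r+1$. For the $2$-extension $\mathcal P$ I route every remaining internal vertex $w$ of a block through a length-$2$ $T$-path joining the two $T$-vertices of that block, namely $c_{i-1}\,w\,c_i$ for a middle block and $c_1\,w\,w_0$ (respectively $c_{r-1}\,w\,w_r$) for an end block. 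Since $m_i\ge 3$, every tree edge is covered, so $H=T\cup\bigcup_{P\in\mathcal P}P$ is bridgeless, $\mathcal P$ is a $2$-extension, and every vertex of $G$ occurs either in $T$ or as the internal vertex of some $T$-path; hence the orientation $D$ of Lemma~\ref{la: tree lemma} may be taken with $V(D)=V(G)$, and extending $D$ to all of $G$ by orienting the leftover edges arbitrarily keeps it strong and does not increase distances. Because $m_i\ge 3$ forces $n=\sum_i m_i-(r-1)\ge 2r+1$, we get $p=r+1\le\frac{n+1}{2}$, and substituting $k=2$ and this value of $p$ into part (ii) of Lemma~\ref{la: tree lemma} yields $\overrightarrow{{\rm diam}}(G)\le\lfloor 3n/4\rfloor$ for $n$ even and $\le\lfloor 3(n+1)/4\rfloor$ for $n$ odd after the parity arithmetic; the path of triangles is the configuration that saturates this.

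\emph{Branching case.} If some node of the block structure has degree at least three -- a cut vertex $v$ lying in at least three blocks, or a block $B$ containing at least three cut vertices -- I would split $G$ there. For a branch cut vertex $v$ I partition the branches at $v$ into three nonempty parts $Q_1,Q_2,Q_3$, each sharing only $v$, orient each by the induction hypothesis, and glue at $v$; a shortest path between two parts passes through $v$, so its length is at most $\overrightarrow{{\rm diam}}(Q_i)+\overrightarrow{{\rm diam}}(Q_j)$ for some $i\ne j$. For a branch block $B$ I orient $B$ by Proposition~\ref{prop: orientation Kn} (the case $K_4$ being covered by its eccentricity statement) and recurse on the branches at the cut vertices of $B$, a crossing path now picking up the extra term $d_B(c_i,c_j)\le 2$.

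\emph{Main obstacle.} The delicate part is the bookkeeping in the branching case, exactly as in Case~1 of the proof of Theorem~\ref{thm: bound on order ito blocks}. The naive estimate $\overrightarrow{{\rm diam}}(Q_i)+\overrightarrow{{\rm diam}}(Q_j)\le\frac34 n_i+\frac34 n_j$ is useless on its own, because the parts overlap in the branch vertex and it gives only $\frac34(n+1)$; the gain comes from the \emph{third} part, since $n_1+n_2=n+2-n_3$ and $n_3\ge 2b_3+1\ge 3$ by Proposition~\ref{prop: n 2p+1}, so the two parts realising the maximum together have at most $n-1$ vertices and leave a full unit of slack. Choosing the grouping so that the two largest parts are as small as possible, and tracking the floors through all residues of $n$ modulo $4$ (and absorbing the extra $d_B(c_i,c_j)\le 2$ in the branch-block case using the internal vertices of $B$ together with the saved third part), is where the real work lies; I expect this parity-and-slack accounting, rather than the construction of the orientations, to be the crux.
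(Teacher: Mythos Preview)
Your path case is essentially the paper's argument specialised to a path of blocks, and the arithmetic there is fine. The genuine gap is in the branching case, specifically the \emph{branch block} sub-case (a block $B$ with at least three cut vertices but no cut vertex of $G$ lying in more than two blocks). Your plan is to orient $B$ via Proposition~\ref{prop: orientation Kn}, recurse on the branches $R_i$ at the cut vertices $c_i$ of $B$, and bound a crossing distance by $\overrightarrow{\rm diam}(R_i)+d_B(c_i,c_j)+\overrightarrow{\rm diam}(R_j)\le f(n_i')+2+f(n_j')$, hoping to absorb the extra $2$ using the internal vertices of $B$ and the ``saved'' third branch. This fails. Take $B=K_3$ with all three vertices being cut vertices of $G$, attach paths of three triangles at $c_1$ and $c_2$ (so $n_1'=n_2'=7$) and a single triangle at $c_3$ ($n_3'=3$). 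Then $n=17$, $B$ has no internal vertex, and your estimate gives $f(7)+2+f(7)=6+2+6=14$, while the target is $f(17)=13$. More generally, whenever $n_1'\equiv n_2'\equiv 3\pmod 4$ one has $f(n_1')+f(n_2')=f(n_1'+n_2'+1)$ exactly, and since $f(N+2)-f(N)=1$ for $N\equiv 3\pmod 4$, a third branch of order $3$ cannot compensate for the $+2$. No amount of clever grouping fixes this: in this example every cut vertex lies in exactly two blocks, so your cut-vertex split is unavailable, and the three branches at $B$ are forced.

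The paper sidesteps the whole induction. It takes $T$ to be a spanning tree of $G[S]$, where $S$ is the set of \emph{all} cut vertices of $G$ (so $|V(T)|=s$), builds a $2$-extension of $T$ inside $G$ directly, and applies Lemma~\ref{la: tree lemma} once to get ${\rm diam}(D)\le\frac{3}{2}s+3$ (or $\frac{3}{2}s+\frac{5}{2}$), then invokes $s\le\lfloor\frac{n-3}{2}\rfloor$ from Proposition~\ref{prop: n 2p+1}. In other words, your ``path case'' construction already works for arbitrary block structure once you let $T$ be the tree of cut vertices rather than a path; the branching is absorbed by Lemma~\ref{la: tree lemma} itself, and no recursion is needed. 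In the example above, $s=7$ and the paper's bound gives $\frac{3}{2}\cdot 7+\frac{5}{2}=13=f(17)$ on the nose.
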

	
	\begin{proof}
Let $G$ be a bridgeless block graph of order $n$. If $G$ has no cut vertices, then 
$G \cong K_n$ and so $\overrightarrow{{\rm diam}}(G) \leq 3$. Assume that $G$ has at least 
one cut vertex. Let $S= \{v_1, v_2, \ldots, v_s\}$ be the set of all cut vertices of $G$. Note 
that $G[S]$ is connected. Let $T$ be a spanning tree of $G[S]$, where $| V(T) | = s$. We 
construct a $2$-extension $\cal P$ of $T$ as follows. 

Let $B$ be a block of $G$ which is not an end block. 
First assume that $B$ has exactly two cut 
vertices, say $u$ and $v$. Since $B$ is a complete graph on at least three vertices, it 
contains at least on other vertex, say $w$. Define $P_{uv}$ to be the path $u,w,v$. 
Now assume that $B$ is a block that contains more than two cut vertices of $G$. 
For every pair $u,v$ of cut vertices that belong to $B$ but are not adjacent in $T$, let 
$P_{uv}$ be the $(u,v)$-path of length $1$. 
Let ${\cal P}$ be the set of paths defined above for the blocks with two or more cut vertices of 
$G$. Then ${\cal P}$ is a $2$-extension of $T$. Let $G'=T \cup \bigcup_{P \in {\cal P}} P$. By 
Lemma \ref{la: tree lemma} there exists a strong orientation $D'$ of a subgraph of $G'$ 
containing $T$ such that
			\begin{enumerate}
				\renewcommand{\theenumi}{\roman{enumi}}
				\item 	for every two vertices $u,v \in V(T)$,
				\begin{equation}
					\label{ineq: tree}
					d_{D'}(u,v) \leq \begin{cases}
						\frac{3}{2}s-1 & \textnormal{for $s$ even,} \\ 
						\frac{3}{2}(s-1) & \textnormal{for $s$ odd}. \\ 
					\end{cases}
				\end{equation}
				\item for every two vertices $u,v \in V(D')$,
				\begin{equation}
					\label{ineq: D'}
					d_{D'}(u,v) \leq \begin{cases}
						\frac{3}{2}s& \textnormal{for $s$ even,} \\ 
						\frac{1}{2}(3s-1) & \textnormal{for $s$ odd}. \\ 
					\end{cases}
				\end{equation}
			\end{enumerate}
			
			We now extend $D'$ to a strong orientation $D$ of $G$ such that
			\begin{equation}
				\label{orientation condition}
				\begin{minipage}{90mm}
					{every vertex not in $D$ is at distance at most $2$ to and from the nearest vertex of $T$.}
				\end{minipage}
			\end{equation}
		Let $B$ be a block of $G$. If $B$ is an end block of $G$, then it contains a unique cut vertex $w$ of $G$. By Corollary \ref{prop: orientation Kn} there exists an orientation of $B$ of diameter at most $3$ such that $w$ has eccentricity $2$.  If $B$ is not an end block, then $B$ contains two cut vertices $u, w$ of $G$, which are in $T$. For every vertex $v$ of $B$ not in $D'$ we orient the edges $uv$ and $vw$ as $\overrightarrow{uv}$ and $\overrightarrow{vw}$, respectively. All remaining edges of $B$ are oriented arbitrarily. Clearly, the resulting orientation satisfies (\ref{orientation condition}). \\
		We now bound the diameter of $D$ by showing that

	 		\begin{equation}
	 	\label{eqn: proof condition}
	 	{\rm diam}(D)  \leq \begin{cases}
	 		 \frac{3}{2}s + 3 & \textnormal{for $s$ even,} \\ 
	 	\frac{3}{2}s + \frac{5}{2}  & \textnormal{for $s$ odd}. \\ 
	 	\end{cases}
	 \end{equation}
	 	We assume that $s$ is even; the case $s$ odd is almost identical.\\[1mm]
			 {\bf Case 1:} $u,w \in V(D)$.\\[1mm]
			 From (\ref{ineq: D'}) we have $d_D(u,w) \leq \frac{3}{2}s$.\\[1mm]
			{\bf Case 2:} $u,w \in V(G) - V(D)$.\\[1mm]
			 Let $a$ and $b$ be vertices of $T$ within distance $2$ of $u$ and $w$, respectively. Then $d_D(u,w) \leq d_D(u,a) + d_D(a,b) + d_D(b,w)$. It follows from (\ref{ineq: tree}) that $d_D(u,w) \leq 2 + \frac{3}{2}s-1 + 2 = \frac{3}{2}s +3$.\\[1mm]
			{\bf Case 3:} $u \in V(D)$ and $w \in V(G)-V(D)$.\\[1mm]
			 For $w \notin V(T)$ and $a \in V(T)$, $d_D(a,w) \leq 2$ from (\ref{orientation condition}). It follows from (\ref{ineq: D'}) that $d_D(u,w) \leq d_D(u,a)+d_D(a,w) \leq \frac{3}{2}s+2$.\\[1mm]
			{\bf Case 4:} $w \in V(D)$ and $u \in V(G)-V(D)$.\\[1mm]
			 As in Case 3, $d_D(u,w) \leq \frac{3}{2}s+2$.\\[1mm]
			 
			 This proves \eqref{eqn: proof condition}. Denote the right hand side of \eqref{eqn: proof condition} by $f(s)$. Clearly, $f(s)$ is increasing in $s$. It follows from Proposition 1 that $s \leq \lfloor \frac{n-3}{2} \rfloor$. Hence,
			 
			 	 		\begin{equation*}
			 	{\rm diam}(D)  \leq  f(  \big\lfloor \frac{n-3}{2} \big\rfloor ) = \begin{cases}
			 		\frac{3}{4}n  & \textnormal{if $n \equiv 0 \pmod{4}$,} \\ 
			 		\frac{3}{4}n + \frac{1}{4} & \textnormal{if $n \equiv 1 \pmod{4}$,}\\ 
			 		 \frac{3}{4}n   - \frac{1}{2}  & \textnormal{if $n \equiv 2 \pmod{4}$,} \\ 
			 		\frac{3}{4}n + \frac{3}{4} & \textnormal{if $n \equiv 3 \pmod{4}$,}\\
			 	\end{cases}
			 \end{equation*}
		 and (\ref{eqn: block bound}) follows.
	\end{proof}

Theorem \ref{theo: block bound} is sharp for $n \geq 5$. Consider the following construction 
of a graph $G_n'$ that attains the bound. For $n$ odd, we define $G_n':=G_{n, (n-1)/2}$ as 
considered in Proposition \ref{prop: oriented diameter order and number of blocks sharpness}. 
For $n$ even, we construct $G_n'$ from $G_{n, (n-2)/2}$ by replacing the $4$-cycle with a 
complete graph $K_4$. Figure \ref{fig: G12} shows an orientation of $G_{12}'$ for $n$ even.
			
				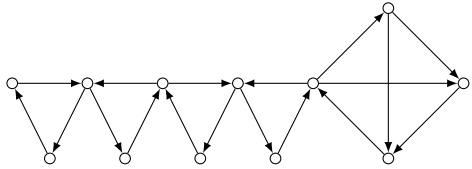
\begin{figure}[h]
				\centering
					\begin{tikzpicture}
						[scale=0.5,inner sep=0.5mm, 
						vertex/.style={circle,draw}, 
						thickedge/.style={line width=2pt}] 
						
						\node[vertex] (1) at (-10,0) [fill=white] {};
						\node[vertex] (2) at (-9,-2) [fill=white] {};
						\node[vertex]  (3) at (-8,0) [fill=white] {};
						\node[vertex]  (4) at (-7,-2) [fill=white] {};
						\node[vertex]  (5) at (-6,0) [fill=white] {};
						\node[vertex]  (6) at (-5,-2) [fill=white] {};
						\node[vertex]  (7) at (-4,0) [fill=white] {};
						\node[vertex]  (8) at (-3,-2) [fill=white] {};
						\node[vertex]  (9) at (-2,0) [fill=white] {};
						\node[vertex] (10) at (0,2) [fill=white] {};
						\node[vertex] (11) at (0,-2) [fill=white] {};
						\node[vertex] (12) at (2,0) [fill=white] {};
						\draw[-latex] (2) -- (1);
						\draw[-latex] (1) -- (3);
						\draw[-latex] (3) -- (2);
						\draw[-latex] (5) -- (3);
						\draw[-latex] (3) -- (4);
						\draw[-latex] (4) -- (5);
						\draw[-latex] (5) -- (7);
						\draw[-latex] (7) -- (6);
						\draw[-latex] (6) -- (5);
						\draw[-latex] (9) -- (7);
						\draw[-latex] (7) -- (8);
						\draw[-latex] (8) -- (9);
						\draw[-latex] (9) -- (10);
						\draw[-latex] (10) -- (11);
						\draw[-latex] (11) -- (9);
						\draw[-latex] (10) -- (12);
						\draw[-latex] (12) -- (11);
						\draw[-latex] (9) -- (12);
					\end{tikzpicture}
					\caption{The graph $G_{12}'$ with an optimal orientation.}
					\label{fig: G12}
				\end{figure}

			\begin{prop}
			\label{prop: oriented diameter block graph sharpness}
			Let $G_n'$ be the graph constructed above. Then
			\begin{equation*}
				\overrightarrow{{\rm diam}}(G_n') = \begin{cases}
					\lfloor \frac{3n}{4} \rfloor& \textnormal{for $n$ even,} \\ 
					\lfloor \frac{3(n+1)}{4} \rfloor & \textnormal{for $n$ odd}. \\ 
				\end{cases}
			\end{equation*}
		\end{prop}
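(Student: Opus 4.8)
The plan is to establish the matching lower bound $\overrightarrow{{\rm diam}}(G_n') \geq f(n)$, where $f(n)$ denotes the right-hand side of the claimed equality, since the upper bound is already delivered by Theorem \ref{theo: block bound}. I would treat the two parities separately but exploit the fact that the construction $G_n'$ is obtained from the extremal graph $G_{n,p}$ of Proposition \ref{prop: oriented diameter order and number of blocks sharpness}. For $n$ odd, $G_n' = G_{n,(n-1)/2}$ is itself a block graph (every $3$-cycle is a $K_3$ and the terminal cycle $C_{p-1}$ has length $n-2(p-1)=n-(n-1)=1$\ldots), so here I expect to invoke Proposition \ref{prop: oriented diameter order and number of blocks sharpness} directly, checking that $n - \lfloor \frac{p}{2} \rfloor$ with $p = (n-1)/2$ simplifies to $\lfloor \frac{3(n+1)}{4} \rfloor$.

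For the lower bound argument itself I would mimic the structure of the proof of Proposition \ref{prop: oriented diameter order and number of blocks sharpness}. Let $D$ be a strong orientation of $G_n'$ of minimum diameter. The cut vertices $a_1,\ldots,a_{p-1}$ force any $(a_1,a_{p-1})$-path and any $(a_{p-1},a_1)$-path to traverse them in order, so that
\begin{equation*}
	d_D(a_1,a_{p-1}) + d_D(a_{p-1},a_1) = \sum_{i=1}^{p-2}\big(d_D(a_i,a_{i+1}) + d_D(a_{i+1},a_i)\big).
\end{equation*}
Each consecutive pair $a_i,a_{i+1}$ lies in a $3$-cycle (a $K_3$ block), and since each edge of that triangle must be used in exactly one direction along one of the two opposing paths, $d_D(a_i,a_{i+1}) + d_D(a_{i+1},a_i) = 3$. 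Summing gives $3(p-2)$, so the larger of the two distances is at least $\lceil \frac{3(p-2)}{2}\rceil$. I would then append the contribution of the end structure exactly as before: an out-neighbour $q_1$ of $a_1$ contributes $2$ and the far endpoint in the terminal block contributes the remaining internal distance, yielding the bound $2 + \lceil \frac{3(p-2)}{2}\rceil + (\text{terminal distance})$.

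The genuine point of difference — and the step I expect to be the main obstacle — is the $n$ even case, where the terminal block is the $K_4$ rather than a long cycle. Here $p = (n-2)/2$ blocks consist of $(p-1)$ triangles plus one $K_4$, and I must account correctly for the distance contributed inside the $K_4$ together with the distance across the triangle chain. By Proposition \ref{prop: orientation Kn} a $K_4$ has oriented diameter $3$, so within the optimal orientation a vertex of the $K_4$ distinct from the attaching cut vertex $a_{p-1}$ can be at distance up to $3$ from $a_{p-1}$; the subtlety is that I need a genuine \emph{lower} bound, so I must argue that no strong orientation of $G_n'$ can simultaneously make the triangle-chain distance small and keep the $K_4$-eccentricity small, because the orientation of the edges at the shared cut vertex $a_{p-1}$ is constrained. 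Concretely, I would pick $q_1$ as before and a vertex $q_{p-1}$ in the $K_4$ realizing eccentricity from $a_{p-1}$, then show
\begin{equation*}
	\overrightarrow{{\rm diam}}(G_n') \geq d_D(q_1,q_{p-1}) \geq 2 + \Big\lceil \tfrac{3(p-2)}{2}\Big\rceil + d_D(a_{p-1},q_{p-1}),
\end{equation*}
and verify that choosing the $K_4$ vertex optimally forces $d_D(a_{p-1},q_{p-1}) \geq 2$ (even when the triangle chain is oriented to minimize its own contribution), after which a parity bookkeeping in $p$ reduces the right-hand side to $\lfloor \frac{3n}{4}\rfloor$. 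The careful part is ruling out a clever orientation that trades length in one segment against another, which I would handle by noting that the $K_3$ constraint above is an exact equality (not an inequality) for \emph{every} strong orientation, so the only freedom lies inside the $K_4$ and at the two extremal attachment points.
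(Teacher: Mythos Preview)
Your approach is essentially identical to the paper's: invoke Proposition~\ref{prop: oriented diameter order and number of blocks sharpness} directly for $n$ odd, and for $n$ even repeat that argument verbatim with the only change being that the terminal block (now a $K_4$) contributes an out-eccentricity of at least $2$ from $a_{p-1}$ rather than $n-2p+1$. One arithmetic slip to fix: with $p=(n-1)/2$ the terminal cycle $C_{p-1}$ has length $n-2(p-1)=3$, not $1$, so it is a $K_3$ and $G_n'$ is indeed a block graph; also, your worry about ``trading length between segments'' is unfounded, since the orientations of distinct blocks are independent and in any strong tournament on four vertices $a_{p-1}$ has out-degree at most $2$, guaranteeing a vertex at distance $\geq 2$.
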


\begin{proof}

For $n$ odd, we have  $G_n' =G_{n, (n-1)/2}$, and the result holds by 
Proposition \ref{prop: oriented diameter order and number of blocks sharpness}.
For $n$ even, the proof of Proposition \ref{prop: oriented diameter block graph sharpness} is 
almost identical to that of Proposition \ref{prop: oriented diameter order and number of 
blocks sharpness}, the only difference being the eccentricity of $a_{p-1}$ in the last
block, which was $3$ in the $4$-cycle and which is now at least $2$ in $K_4$. 
		\end{proof}

\end{document}